\newtheorem{theorem}{Theorem}[section]
\newtheorem{lemma}[theorem]{Lemma}
\newtheorem{proposition}[theorem]{Proposition}
\theoremstyle{definition}
\theoremstyle{remark}
\newtheorem{assum}{Additional assumption}
\newtheorem{claim}{Claim}
\newtheorem{rem}{Remark}
\numberwithin{equation}{section}
\begin{document}

\title[Commutators in profinite groups]{Commutators and commutator subgroups in profinite groups}
\author{Cristina Acciarri}

\address{Cristina Acciarri:  Department of Mathematics, University of Brasilia,
Brasilia-DF, 70910-900 Brazil}
\email{acciarricristina@yahoo.it}

\author{Pavel Shumyatsky} 

\address{Pavel Shumyatsky: Department of Mathematics, University of Brasilia,
Brasilia-DF, 70910-900 Brazil}

\email{pavel@unb.br}

\keywords{Profinite groups, procyclic subgroups, commutators}
\subjclass[2010]{20E18; 20F14}

\thanks{ The first author was supported by FEMAT and CNPq. The second author was supported by CAPES and CNPq.}

\begin{abstract}  Let $G$ be a profinite group. We prove that the commutator subgroup $G'$ is finite-by-procyclic if and only if the set of all commutators of $G$ is contained in a union of countably many procyclic subgroups. 
\end{abstract}

\maketitle
\section{Introduction}
Let $G$ be a profinite group. If $G$ is covered by countably many closed subgroups, then by Baire Category Theorem (\cite[p.\ 200]{kell}) at least one of the subgroups is open. This simple observation suggests that if $G$ is covered by countably many closed subgroups with certain specific properties, then the structure of $G$ is similar to that of the covering subgroups. For example, if $G$ is covered by countably many periodic subgroups, then $G$ is locally finite. We recall that the group $G$ is periodic (or torsion) if each element in $G$ has finite order. The group is locally finite if each of its finitely generated subgroups is finite. Following his solution of the restricted Burnside problem \cite{ze1,ze2} and using Wilson's reduction theorem \cite{Wil}, Zelmanov proved that periodic compact groups are locally  finite \cite{Zel}. Another example is that if $G$ is covered by countably many subgroups of finite rank, then $G$ has finite rank. The profinite group $G$ is said to have finite rank at most $r$ if each closed subgroup of $G$ can be topologically generated by at most $r$ elements. A somewhat less obvious result of the same nature is that a profinite group is covered by countably many procyclic subgroups if and only if it is finite-by-procyclic (see Proposition \ref{cove} in the next section). The group $G$ is called finite-by-procyclic if it has a finite normal subgroup $N$ such that $G/N$ is procyclic.

If $x,y\in G$, then $[x,y]=x^{-1}y^{-1}xy$ is the commutator of $x$ and $y$.
The closed subgroup of $G$ generated by all commutators is the commutator subgroup $G'$ of $G$. In general, elements of $G'$ need not be commutators (see for instance \cite{km} and references therein). On the other hand, Nikolov and Segal showed that for any positive integer $m$ there exists an integer $f(m)$ such that if $G$ is $m$-generator, then every element in $G'$ is a product of at most $f(m)$ commutators \cite{nisega}. Several recent results  indicate that if the set of all commutators is covered by finitely, or countably, many subgroups with certain specific properties, then the structure of $G'$ is somehow similar to that of the covering subgroups.

It was shown in \cite{AS3} that if $G$ is a profinite group that has finitely many periodic subgroups (respectively, subgroups of finite rank) whose union contains all commutators, then $G'$ is locally finite (respectively, $G'$ is of finite rank). In \cite{AS1} similar results were obtained for the case where commutators are covered by countably many subgroups: if $G$ is a profinite group that has countably many periodic subgroups (respectively, subgroups of finite rank) whose union contains all commutators, then $G'$ is locally finite (respectively, $G'$ is of finite rank).
In \cite{DMS} the corresponding results were obtained for profinite groups in which commutators of higher order are covered by countably many periodic subgroups, or subgroups of finite rank.
 It was shown in \cite{FMS} that if $G$ is a profinite group that has finitely many, say $m$, procyclic subgroups whose union contains all commutators, then $G'$ is finite-by-procyclic. In fact, $G'$ has a finite characteristic subgroup $M$ of $m$-bounded order such that $G'/M$ is procyclic. Moreover, if $G$ is a pro-$p$ group that has $m$ procyclic subgroups whose union contains all commutators, then $G'$ is either finite of $m$-bounded order or procyclic. Earlier, Fern\'andez-Alcober and Shumyatsky proved that if $G$ is an abstract group in which the set of all commutators is covered by finitely many cyclic subgroups, then the commutator subgroup $G'$ is either finite or cyclic \cite{FerShu}.

In the present article we deal with profinite groups in which the commutators are covered by countably many procyclic subgroups. The main result is the following theorem.

\begin{theorem}
\label{mainth}
Let $G$ be a profinite group. The commutator subgroup $G'$ is finite-by-procyclic if and only if the set of all commutators of $G$ is contained in a union of countably many procyclic subgroups. 
\end{theorem}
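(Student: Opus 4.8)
The plan is to treat the two implications separately, with the forward implication being essentially immediate and the converse carrying all of the weight.

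For the easy direction, suppose $G'$ is finite-by-procyclic. Then Proposition \ref{cove} applies directly to $G'$: being finite-by-procyclic, $G'$ is covered by countably many procyclic subgroups. Since every commutator of $G$ lies in $G'$, the set of all commutators is contained in this countable union of procyclic subgroups, which is exactly what is required.

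For the converse, assume that the set $K$ of all commutators is contained in a union $\bigcup_{i\ge 1} C_i$ of procyclic subgroups. My first observation is that $K$ is closed: it is the image of the compact space $G\times G$ under the continuous map $(x,y)\mapsto [x,y]$, hence compact and closed in $G$. Moreover $K$ is invariant under conjugation and satisfies $K=K^{-1}$ and $1\in K$; these structural features, rather than the mere fact that $K$ generates $G'$, will be essential (a generic generating set lying in finitely many procyclic subgroups need not force finite-by-procyclic structure, so the argument must use that $K$ is the full set of commutators). The first genuine reduction is to note that each $C_i$, being procyclic, has rank $1$ and in particular finite rank. Thus the commutators are covered by countably many subgroups of finite rank, and the theorem of \cite{AS1} yields that $G'$ itself has finite rank. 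I would regard this as the key preliminary structural gain, since finite rank severely constrains $G'$: only finitely many primes contribute infinite sections, and the $p$-parts are of bounded rank.

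The core of the argument is to pass from a countable cover of $K$ to a finite one, so that the finitary theorem of \cite{FMS} can be applied. Here I would invoke the Baire category theorem on the compact space $G\times G$, writing it as the ascending union of the closed sets $S_n=\{(x,y): [x,y]\in C_1\cup\cdots\cup C_n\}$; one of them, say $S_N$, has nonempty interior, so there are an open normal subgroup of $G$ and a coset-pair all of whose commutators lie in the finite union $C_1\cup\cdots\cup C_N$. The main obstacle, and the technical heart of the proof, is to upgrade this local statement to a global one: a finite union of procyclic subgroups is not closed under multiplication, so the standard commutator identities, which express a commutator $[u,v]$ through products of the controlled commutators such as $[au,b]$, $[a,bv]$ and $[a,b]^u$, do not by themselves keep one inside such a union. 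I expect to overcome this by exploiting the conjugation-invariance and closedness of $K$ together with the finite-rank structure of $G'$, controlling the offending products through an inverse-limit argument over the finite quotients of $G$, in which the $p$-parts of $G'$ are bounded. Once it is established that all commutators of $G$ lie in finitely many procyclic subgroups, \cite{FMS} gives at once that $G'$ is finite-by-procyclic, completing the proof.
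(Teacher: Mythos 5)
Your forward direction coincides with the paper's: apply Proposition \ref{cove} to $G'$ and note that all commutators lie in $G'$. The converse direction, however, rests on a strategy that is not merely incomplete but provably impossible. Your endgame is to upgrade the countable procyclic cover of the set of commutators to a \emph{finite} one and then invoke \cite{FMS}. No such upgrade exists in general, and the paper itself points this out: by the pro-$p$ case of the theorem of \cite{FMS}, a finite procyclic cover of the commutators of a pro-$p$ group forces $G'$ to be finite or procyclic, whereas under a countable cover $G'$ need only be finite-by-procyclic. Concretely, let $D=\mathbb{Z}_p\rtimes\mathbb{Z}_p$ with the acting copy of $\mathbb{Z}_p$ operating through multiplication by $1+p$, so that $D'\cong\mathbb{Z}_p$, and let $E$ be an extraspecial group of order $p^3$, so that $E'\cong C_p$. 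For $G=D\times E$ one has $G'\cong\mathbb{Z}_p\times C_p$, which is finite-by-procyclic but neither finite nor procyclic; hence by Proposition \ref{cove} the set of commutators of $G$ is covered by countably many procyclic subgroups, yet it admits no finite procyclic cover, since such a cover would force $G'$ to be finite or procyclic. So your Baire category step on $G\times G$ is fine as far as it goes (it yields a coset pair of an open normal subgroup whose commutators lie in a finite union), but the ``technical heart'' you defer --- globalizing this to a finite cover of all commutators --- is exactly the step that cannot be carried out, and the appeal to \cite{FMS} can never be reached.

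This is why the paper takes a structurally different route: from the countable-cover results of \cite{AS1} it deduces that $G'$ is virtually soluble (and of finite rank), then inducts on the derived length of a soluble open normal subgroup, reducing everything to two cases proved separately in Sections 3 and 4, namely the case where $G$ has an open normal abelian subgroup (Theorem \ref{virtabeliancase}) and the metabelian case (Theorem \ref{meta}). In those arguments Baire category is used repeatedly, but always \emph{locally}: to show that specific subgroups consisting of commutators, such as $[A,x]$ or $[G',x]$, are virtually procyclic, or to find a coset $bH$ on which $[G',x]$ lies in a single member $C_{\overline{\imath}}$ of the cover; these local gains are then combined with Guralnick's theorems guaranteeing that suitable subgroups consist entirely of commutators. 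If you want to salvage your approach, the lesson is that the countable hypothesis must be exploited subgroup by subgroup in this fashion, not funneled into the finite-cover theorem of \cite{FMS}.
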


We notice that unlike in the other examples, the structure of pro-$p$ groups $G$ in which the commutators are covered by finitely many procyclic subgroups is different from that of pro-$p$ groups in which commutators are covered by countably many procyclic subgroups: in the former case $G'$ is either finite or procyclic while, according to Theorem \ref{mainth}, this is not necessarily true in the latter case.

An interesting observation that follows from Theorem \ref{mainth} and Proposition \ref{cove} is that if the set of commutators of a profinite group $G$ is contained in a union of countably many procyclic subgroups, then the whole commutator subgroup $G'$ is contained in a union of countably many procyclic subgroups. But of course we cannot claim that the family of  procyclic subgroups that covers the set of all commutators is necessarily the same as the one that covers $G'$.

Another noteworthy fact that can be deduced from Theorem \ref{mainth} concerns profinite groups $G$ such that $G'$ is pro-$p$. Assume that $G$ is such a group and  the set of commutators of $G$ is contained in a union of countably many procyclic subgroups of $G'$.  Then at least one of the subgroups is open in $G'$. 
Indeed, let $C_1,C_2,\dots$ be countably many procyclic subgroups of $G'$ containing the commutators. Theorem \ref{mainth} tells us that $G'$ has a finite normal subgroup $N$ such that $G'/N$ is procyclic. If $G'$ is finite, there is nothing  to prove. So we assume that $G'/N$ is infinite. Since $G'$ is a pro-$p$ group, it follows that any infinite subgroup of $G'$ is open. In particular, if $G$ contains a commutator, say $x\in C_k$, of infinite order, then $C_k$ is open in $G'$. Otherwise, if all commutators in $G$ have finite order, then all of them must belong to $N$ and we conclude that, since $N$ contains all commutators, $N=G'$. In that case $G'$ is finite and the result follows.

We do not know whether the similar phenomenon holds if the co\-ve\-ring subgroups are not necessarily
procyclic.

We have already mentioned that a finite-by-procyclic profinite group is covered by countably many procyclic subgroups. Thus, the hard part of the proof of Theorem \ref{mainth} is the one where we show that if the set of all commutators of $G$ is contained in a union of countably many procyclic subgroups, then $G'$ is finite-by-procyclic. In the next section we provide some helpful results which are used throughout the paper. We also establish that a profinite group is covered by countably many procyclic subgroups if and only if it is finite-by-procyclic (Proposition \ref{cove}). In Sections 3 and 4 we study profinite groups in which the commutators are covered by countably many procyclic subgroups. Section 3 deals with virtually abelian groups while Section 4 is devoted to the metabelian case. After the virtually abelian and the metabelian cases are dealt with, the proof of Theorem \ref{mainth} becomes easy. It is given in the final section.
 
\section{Preliminaries}

For a profinite group $G$ we denote by $\pi(G)$ the set of prime divisors of the orders of finite continuous images of $G$. If a profinite group $G$ has $\pi(G)\subseteq\pi$, then we say that $G$ is a pro-$\pi$ group. Recall that Sylow theorems hold for $p$-Sylow subgroups of a profinite group (see, for example, \cite[Ch.\ 2]{wil-book}). When dealing with profinite groups we consider only continuous homomorphisms and quotients by closed normal subgroups. If $H$ is a closed subgroup of $G$ such that $\pi(H)\subseteq\pi$, we say that $H$ is a pro-$\pi$ subgroup of $G$, or just a $\pi$-subgroup of $G$. The group $G$ possesses a certain property virtually if it has an open subgroup with that property.

\begin{lemma}\label{virtprocyclic}
Let $A$ be an abelian profinite group which is virtually procyclic. Then $A$ is finite-by-procyclic.
\end{lemma}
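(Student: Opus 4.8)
The plan is to exploit the decomposition of an abelian profinite group into its Sylow components and thereby reduce the statement to the pro-$p$ case, where the structure theorem for finitely generated abelian pro-$p$ groups applies.

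First I would fix an open procyclic subgroup $C$ of $A$, so that $A/C$ is finite. Writing $A=\prod_p A_p$ for the decomposition of the abelian profinite group $A$ into its Sylow subgroups $A_p$, and similarly $C=\prod_p C_p$ with $C_p=C\cap A_p$, each $C_p$ is a procyclic pro-$p$ group that is open in $A_p$. Since $A/C\cong\prod_p A_p/C_p$ is finite, there is a \emph{finite} set of primes $\pi_0$ such that $A_p=C_p$ for every $p\notin\pi_0$; in particular $A_p$ is procyclic for all $p\notin\pi_0$.

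Next I would analyse the finitely many remaining components $A_p$ with $p\in\pi_0$. Because $C_p$ is topologically cyclic and open of finite index in $A_p$, the group $A_p$ is a finitely generated abelian pro-$p$ group, and hence by the structure theorem it splits as $A_p\cong\mathbb{Z}_p^{d_p}\times F_p$ with $F_p$ a finite abelian $p$-group. As the open subgroup $C_p$ is procyclic we must have $d_p\le 1$, so I may write $A_p=D_p\times F_p$, where $D_p$ is procyclic (a copy of $\mathbb{Z}_p$ when $d_p=1$, trivial when $d_p=0$) and $F_p$ is finite. For $p\notin\pi_0$ I simply set $D_p=A_p$ and $F_p=1$.

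Finally I would assemble the pieces. Put $N=\prod_p F_p=\prod_{p\in\pi_0}F_p$, which is a finite subgroup of $A$, and observe that $A/N\cong\prod_p D_p$. Since each $D_p$ is a procyclic pro-$p$ group, this product is an inverse limit of finite cyclic groups and is therefore procyclic; hence $A/N$ is procyclic and $A$ is finite-by-procyclic, as claimed. The only genuinely nontrivial ingredient is the classification of finitely generated abelian pro-$p$ groups used in the middle step; everything else is bookkeeping. The point that most needs care is verifying that the finite factors $F_p$ are trivial outside the finite set $\pi_0$, so that their product $N$ really is finite, and that the product of the procyclic factors $D_p$ over all primes is again procyclic.
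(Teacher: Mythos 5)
Your proof is correct. It is worth noting how it differs from the paper's treatment: the paper disposes of this lemma in a single sentence, citing the structure theorem for finitely generated profinite abelian groups (Ribes--Zalesskii, Theorem 4.3.5: such a group is a direct sum of finitely many procyclic subgroups) and declaring the conclusion immediate. You instead decompose $A$ into its Sylow components $A_p$ first, observe that the open procyclic subgroup $C$ forces $A_p$ to be procyclic for every $p$ outside a finite set $\pi_0$, and invoke structure theory only locally, namely the classification of finitely generated $\mathbb{Z}_p$-modules at the finitely many bad primes, before reassembling via the fact that a product of procyclic pro-$p$ groups over distinct primes is procyclic. The two arguments draw on the same circle of ideas, but yours is essentially a self-contained proof of exactly the case of the cited theorem that is needed, and it makes explicit the bookkeeping that the paper's ``immediate'' conceals: even granting the global structure theorem, one still needs a primary-decomposition argument of your kind to see why a direct sum of finitely many procyclic groups that is virtually procyclic must be finite-by-procyclic (the torsion of a procyclic group need not be a finite, or even closed, subgroup, so the finiteness of the complement really does come from there being only finitely many bad primes). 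Two small steps you assert rather than prove are standard and fine: $d_p\le 1$ holds because an open subgroup of $\mathbb{Z}_p^{d}$ is again free of rank $d$ while closed subgroups of procyclic groups are procyclic; and a surjective inverse limit of finite cyclic groups is procyclic by a compactness argument choosing compatible generators (note that the paper's Lemma~\ref{cyclicprod} cannot be quoted here verbatim, since your factors $D_p$ may be infinite).
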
 
\begin{proof}
The lemma is immediate from the fact that a finitely generated profinite  abelian group is a direct sum of finitely many procyclic subgroups (\cite[Theorem 4.3.5]{riza}).
\end{proof}

The next lemma follows from the fact that a direct product of two finite groups of coprime orders is cyclic if and only if both factors are cyclic.
\begin{lemma}
\label{cyclicprod}
Let $A=\prod_{i\in I}A_{i}$ be an abelian profinite group that can be written as a Cartesian product of finite subgroups $A_{i}$ such that $(|A_{i}|,|A_{j}|)=1$ whenever $i\neq j$. Then the following holds:
\begin{itemize}
\item[(i)] $A$ is procyclic if and only if $A_{i}$ is cyclic for each $i$;
 \item[(ii)] $A$ is virtually procyclic if and only if all but finitely many of the subgroups $A_{i}$ are cyclic.
 \end{itemize}
\end{lemma}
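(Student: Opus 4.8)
The plan is to reduce everything to the primary structure of $A$ together with the elementary fact quoted just before the statement, handling the two implications in each part separately. Throughout I will use the standard topological facts about a Cartesian product $A=\prod_{i\in I}A_i$ of finite groups: the subproducts $U_F=\prod_{i\notin F}A_i$, as $F$ ranges over the finite subsets of $I$, form a base of open neighbourhoods of the identity; consequently every open subgroup of $A$ contains some $U_F$, and every finite continuous quotient of $A$ is already a quotient of the finite subproduct $\prod_{i\in F}A_i$ for a suitable finite $F$. I will also use that a closed subgroup of a procyclic group is procyclic, that a continuous image of a procyclic group is procyclic, and that a closed subgroup of finite index in a profinite group is open.

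For part (i), the forward implication is immediate: each factor $A_i$ is a closed subgroup of $A$ (equivalently, a continuous image of $A$ under the projection), so if $A$ is procyclic then $A_i$ is procyclic, and being finite it is cyclic. For the converse, assume every $A_i$ is cyclic and show that every finite continuous quotient $Q$ of $A$ is cyclic, which gives procyclicity of $A$. By the remark above, $Q$ is a quotient of $\prod_{i\in F}A_i$ for some finite $F$; writing $\bar A_i$ for the image of $A_i$ in $Q$, we get $Q=\prod_{i\in F}\bar A_i$, a product of finitely many cyclic groups whose orders divide the $|A_i|$ and are therefore pairwise coprime. Iterating the quoted two-factor fact over the finite set $F$ shows that $Q$ is cyclic.

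For part (ii), the converse is the easy direction: if all $A_i$ with $i\notin F$ are cyclic for some finite $F$, then $U_F=\prod_{i\notin F}A_i$ is procyclic by part (i), and since $A/U_F\cong\prod_{i\in F}A_i$ is finite, $U_F$ is a closed subgroup of finite index, hence open; thus $A$ is virtually procyclic. For the forward implication, suppose $B$ is an open procyclic subgroup of $A$. Then $B$ contains some $U_F$, and $U_F$ is a closed subgroup of the procyclic group $B$, hence itself procyclic. Applying part (i) to the product $U_F=\prod_{i\notin F}A_i$ shows that $A_i$ is cyclic for every $i\notin F$, that is, all but finitely many of the $A_i$ are cyclic.

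The substance of the argument lies entirely in the coprimality bookkeeping; the only point demanding care is the topological one, namely that openness of a subgroup (or finiteness of a quotient) forces all but finitely many coordinates to behave trivially, so that the genuinely infinite product collapses to a finite one on which the two-factor fact applies. Once that reduction is in place, both equivalences follow formally.
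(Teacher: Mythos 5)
Your proof is correct and follows exactly the route the paper intends: the paper dismisses this lemma with the one-sentence remark that it follows from the two-factor fact (a direct product of two finite groups of coprime orders is cyclic iff both factors are), and your argument is precisely that fact plus the routine topological reductions (open subgroups contain a cofinite subproduct $U_F$, finite quotients factor through finite subproducts) needed to pass between the infinite product and the finite case. Nothing in your write-up diverges from or adds to the paper's approach in substance; it simply supplies the details the authors chose to omit.
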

We will also require the following lemma taken from \cite[Lemma 2.2]{FMS}.
\begin{lemma}\label{normal-characteristic}
Let $H$ be a characteristic subgroup of a  profinite group $G$. Suppose that $H$ possesses a finite normal subgroup $N$ such that $H/N$ is procyclic. Then $G$ has a finite characteristic subgroup $M$ contained in $H$ such that $H/M$ is procyclic.
\end{lemma}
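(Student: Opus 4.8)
The plan is to reduce everything to producing a finite \emph{characteristic subgroup of $H$} whose quotient is procyclic. Since $H$ is characteristic in $G$, every continuous automorphism of $G$ restricts to a continuous automorphism of $H$; hence any subgroup that is characteristic in $H$ is invariant under all automorphisms of $G$, and so is characteristic in $G$. Thus it suffices to find a finite characteristic subgroup $M$ of $H$ with $H/M$ procyclic.

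Next I would reduce to the abelian case. Because $H/N$ is procyclic, and therefore abelian, the derived subgroup $H'$ is contained in $N$; in particular $H'$ is finite and characteristic in $H$. I would then pass to the abelian group $A=H/H'$, in which the image $\bar N=N/H'$ is finite and $A/\bar N\cong H/N$ is procyclic. If I can exhibit a finite characteristic subgroup $\bar M\leq A$ with $A/\bar M$ procyclic, then its full preimage $M$ in $H$ is characteristic in $H$ (being the preimage of a characteristic subgroup under the canonical projection $H\to H/H'$), contains $H'$, is finite because $|M|=|\bar M|\cdot|H'|$, and satisfies $H/M\cong A/\bar M$. So the problem comes down to the abelian case.

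For the abelian group $A$ I would use its canonical Sylow decomposition $A=\prod_p A_p$, noting that each $A_p$ is characteristic in $A$. Writing $\bar N=\prod_p \bar N_p$, only finitely many factors $\bar N_p$ are nontrivial, and applying Lemma \ref{cyclicprod}(i) to the procyclic group $A/\bar N=\prod_p A_p/\bar N_p$ shows that every $A_p/\bar N_p$ is cyclic. For the cofinitely many primes with $\bar N_p=1$ the group $A_p$ is itself cyclic, and I set $M_p=1$. For each of the finitely many remaining primes, $A_p$ is a finite-by-cyclic abelian pro-$p$ group, hence $A_p\cong\mathbb{Z}_p\oplus T_p$ with $T_p$ finite, or $A_p$ is finite; I set $M_p=T_p$ (the torsion subgroup, which is characteristic with quotient $\mathbb{Z}_p$) in the first case and $M_p=A_p$ in the second. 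Then $M=\prod_p M_p$ is a product of finitely many finite groups, so it is finite; it is characteristic in $A$ since each $M_p$ is characteristic in $A_p$; and every $A_p/M_p$ is cyclic, whence $A/M=\prod_p A_p/M_p$ is procyclic by Lemma \ref{cyclicprod}(i). This $M$ is the desired $\bar M$.

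The main obstacle, and the reason the construction must proceed prime by prime, is that the most obvious characteristic subgroup---the intersection $\bigcap_{\varphi\in\operatorname{Aut}(H)}\varphi(N)$---need not have procyclic quotient. Although $H/\varphi(N)\cong H/N$ is procyclic for every $\varphi$, the group $H\big/\bigcap_\varphi\varphi(N)$ embeds only into a (possibly infinite) Cartesian product of procyclic groups, which in general fails to be procyclic. Passing to the canonical Sylow decomposition circumvents this, since only finitely many primes are genuinely obstructive and for each of them the torsion subgroup, or the whole finite Sylow factor, supplies a canonical finite characteristic subgroup with cyclic quotient.
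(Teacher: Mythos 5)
Your proposal is correct, and it is worth noting that the paper itself gives no proof of this lemma at all: it is imported verbatim from \cite[Lemma 2.2]{FMS}. So your argument is a self-contained substitute rather than a variant of anything in the text. Your two reductions are exactly right: a subgroup characteristic in $H$ is characteristic in $G$ because $H$ is characteristic in $G$; and since $H/N$ is abelian, $H'\leq N$ is finite and characteristic, so one may pass to $A=H/H'$ and pull back. The prime-by-prime construction, which modifies only the finitely many Sylow factors meeting $\bar N$ and takes the torsion subgroup there, is sound, and your closing remark about why $\bigcap_{\varphi}\varphi(N)$ fails identifies the real obstruction. Two small repairs are needed, though. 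First, ``cyclic'' should read ``procyclic'' throughout the Sylow argument: the factors $A_p/\bar N_p$ can be $\mathbb{Z}_p$, not just finite cyclic groups. Second, Lemma \ref{cyclicprod} as stated applies only to Cartesian products of \emph{finite} subgroups of pairwise coprime orders, so it does not literally cover $A/\bar N=\prod_p A_p/\bar N_p$ or $A/M=\prod_p A_p/M_p$ when some factor is infinite. What you actually need is the standard structure fact behind the paper's reference to \cite[Theorem 2.7.2]{riza}: a profinite abelian group is procyclic if and only if each of its Sylow pro-$p$ subgroups is procyclic; in one direction, Sylow subgroups of a quotient of $\widehat{\mathbb{Z}}=\prod_p\mathbb{Z}_p$ are quotients of $\mathbb{Z}_p$, and in the other, a Cartesian product over distinct primes of procyclic pro-$p$ groups is a quotient of $\widehat{\mathbb{Z}}$, hence procyclic. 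With that substitution both directions you invoke hold, and the proof goes through.
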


Given subgroups $A$ and $B$ of a group $G$, we denote by $[A,B]$ the subgroup generated by the set $\{[a,b] \mid a\in A, b\in B\}$.  
\begin{lemma}
\label{commutator}
Let $A$ and $B$ be two subgroups of a group $G$ such that $[A,B]=1$. Suppose that $x$ is a commutator in elements of $A$ and $y$ is a commutator in elements of $B$. Then the element $xy$ is a commutator in $G$.  
\end{lemma}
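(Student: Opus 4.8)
The plan is to realize $xy$ as a single commutator by choosing its two entries to be products of the relevant elements of $A$ and $B$. By hypothesis I may write $x=[a_1,a_2]$ with $a_1,a_2\in A$ and $y=[b_1,b_2]$ with $b_1,b_2\in B$. The natural candidate is to set $g=a_1b_1$ and $h=a_2b_2$, both of which lie in $G$, and to claim that $[g,h]=xy$.

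To verify this I would expand, using the convention $[u,v]=u^{-1}v^{-1}uv$, the expression $[a_1b_1,a_2b_2]=(a_1b_1)^{-1}(a_2b_2)^{-1}(a_1b_1)(a_2b_2)=b_1^{-1}a_1^{-1}b_2^{-1}a_2^{-1}a_1b_1a_2b_2$. The crucial step is to invoke the hypothesis $[A,B]=1$, which means that every element of $A$ commutes with every element of $B$. Consequently, inside this product I may move all the $A$-factors past all the $B$-factors, preserving the relative order among the $A$-factors and among the $B$-factors. Collecting the $A$-factors in their order of appearance yields $a_1^{-1}a_2^{-1}a_1a_2=[a_1,a_2]=x$, and collecting the $B$-factors yields $b_1^{-1}b_2^{-1}b_1b_2=[b_1,b_2]=y$, so that $[g,h]=xy$, as desired.

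There is no substantial obstacle here. The only point demanding care is the bookkeeping of the rearrangement: one must check that the relative orders of the $A$-entries and of the $B$-entries inside the expanded commutator are precisely the ones needed to reassemble $[a_1,a_2]$ and $[b_1,b_2]$. Because $A$ and $B$ commute elementwise this follows at once, giving the identity $[a_1b_1,a_2b_2]=[a_1,a_2][b_1,b_2]$ and hence the lemma.
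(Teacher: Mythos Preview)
Your proof is correct and follows exactly the same approach as the paper: write $x=[a_1,a_2]$, $y=[b_1,b_2]$, and use $[A,B]=1$ to obtain $[a_1b_1,a_2b_2]=[a_1,a_2][b_1,b_2]=xy$. The only difference is that you spell out the rearrangement step in more detail than the paper does.
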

\begin{proof}
Let $x=[a_{1},a_{2}]$ for some $a_{1},a_{2}\in A$ and $y=[b_{1},b_{2}]$ for some $b_{1},b_{2}\in B$. Since $[A,B]=1$ we have $[a_{1}b_{1},a_{2}b_{2}]=[a_{1},a_{2}][b_{1},b_{2}]=xy$ and the result follows.
\end{proof}
The following result is well-known.
\begin{lemma} 
\label{autocyclic}
Let $G$ be a procyclic group faithfully (and continuously) acted on by a group $A$. Then $A$ is an abelian group.
\end{lemma}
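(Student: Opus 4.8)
The plan is to prove that the automorphism group $A$, acting faithfully and continuously on a procyclic group $G$, must be abelian, by reducing the problem to understanding the automorphism group of $G$ and showing it is abelian. The key structural fact is that a procyclic group $G$ is a quotient of $\widehat{\mathbb{Z}}$, and more precisely decomposes as a Cartesian product $G = \prod_p G_p$ over primes $p$, where each $G_p$ is a procyclic pro-$p$ group — hence $G_p$ is either finite cyclic of order $p^{k}$ or isomorphic to $\mathbb{Z}_p$. Since the decomposition $G = \prod_p G_p$ is canonical, every continuous automorphism of $G$ preserves it, so $\operatorname{Aut}(G) = \prod_p \operatorname{Aut}(G_p)$. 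Because a Cartesian product of abelian groups is abelian, it suffices to show that $\operatorname{Aut}(G_p)$ is abelian for each prime $p$.

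The reduction I would carry out is the following. Since $A$ acts faithfully on $G$, the natural continuous homomorphism $A \to \operatorname{Aut}(G)$ is injective, so $A$ embeds in $\operatorname{Aut}(G)$; thus it is enough to prove that $\operatorname{Aut}(G)$ itself is abelian. By the remarks above this comes down to the local computation of $\operatorname{Aut}(G_p)$. For $G_p$ finite cyclic of order $p^k$, the automorphism group is $(\mathbb{Z}/p^k\mathbb{Z})^\times$, which is abelian. For $G_p \cong \mathbb{Z}_p$, the continuous automorphisms are exactly multiplication by units of $\mathbb{Z}_p$, so $\operatorname{Aut}(\mathbb{Z}_p) \cong \mathbb{Z}_p^\times$, which is again abelian. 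In either case $\operatorname{Aut}(G_p)$ is abelian, and the product over all $p$ is therefore abelian.

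The main point to get right is the claim that $\operatorname{Aut}(G)$ decomposes as the product $\prod_p \operatorname{Aut}(G_p)$. This rests on the canonicity of the primary decomposition $G = \prod_p G_p$: each $G_p$ is the (unique) Sylow pro-$p$ subgroup of the abelian group $G$, hence characteristic, so any continuous automorphism of $G$ restricts to an automorphism of each $G_p$, and conversely the restrictions determine the automorphism since the $G_p$ topologically generate $G$. This is where I would be most careful, invoking the structure theory of procyclic (equivalently, abelian finitely-topologically-generated-by-one-element) groups as products of their pro-$p$ components. I do not expect a serious obstacle here, since the result is standard; the whole argument is essentially an unwinding of the structure of $\widehat{\mathbb{Z}}$ and its quotients together with the elementary fact that each local automorphism group is a unit group of a commutative ring.
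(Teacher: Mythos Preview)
Your argument is correct and is the standard one. Note, however, that the paper does not actually give a proof of this lemma: it is stated as ``well-known'' and immediately followed by the remark that any automorphism of a procyclic group is automatically continuous. So there is nothing in the paper to compare your proof against.

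Your route via the primary decomposition $G=\prod_p G_p$ and the identification $\operatorname{Aut}(G)=\prod_p\operatorname{Aut}(G_p)$ is sound, and the local computations are right. A slightly quicker version of the same idea, which avoids the bookkeeping of the decomposition, is: choose a topological generator $g$ of $G$; any continuous automorphism $\alpha$ is determined by $\alpha(g)$, and $\alpha(g)=g^{a}$ for a unique $a\in(\widehat{\mathbb{Z}}/n\widehat{\mathbb{Z}})^{\times}$ where $n$ is the (supernatural) order of $G$. Composition of automorphisms corresponds to multiplication of these units, so $\operatorname{Aut}(G)$ embeds in the unit group of a commutative ring and is therefore abelian. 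Your decomposition argument is essentially the Chinese Remainder Theorem unpacking of this.
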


In the above lemma the hypothesis that  $A$ acts on $G$ continuously is superfluous since any automorphism of a procyclic group is continuous.
Now we state a well-known fact about coprime actions on finite groups. As usual, $[G,A,A]$ stands for $[[G,A],A]$.
\begin{lemma}[\cite{gore}, Theorem 3.5.6] 
\label{gorenstein} Let $A$ and $G$ be finite groups with $(|G|,|A|)=1$ and suppose that $A$ acts on $G$. Then we have $[G,A,A]=[G,A].$
\end{lemma}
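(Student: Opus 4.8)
The plan is to prove the two inclusions separately, the nontrivial one being $[G,A]\subseteq[G,A,A]$. Throughout I would work inside the semidirect product $\Gamma=G\rtimes A$, where $[g,a]=g^{-1}g^{a}$ for $g\in G$, $a\in A$, and first record the routine structural facts: $[G,A]$ is normal in $\Gamma$ and $A$-invariant, so $[[G,A],A]\subseteq[G,A]$, which already gives the easy inclusion $[G,A,A]\subseteq[G,A]$. Moreover $[G,A,A]=[[G,A],A]$ is again normal in $\Gamma$, which lets me pass to the quotient $\bar G=G/[G,A,A]$; since $|\bar G|$ divides $|G|$, the induced action of $A$ on $\bar G$ is still coprime.

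Writing $\bar K=[\bar G,A]$ for the image of $[G,A]$, the defining property of the quotient is precisely that $[\bar K,A]=1$, that is, $A$ centralizes $\bar K$. The goal then becomes to show $\bar K=1$, for this says exactly that $[G,A]\subseteq[G,A,A]$ and completes the proof. The key coprime ingredient I would invoke is the factorization $\bar G=[\bar G,A]\,C_{\bar G}(A)=\bar K\,C_{\bar G}(A)$, valid for any coprime action. Granting this, every $\bar g\in\bar G$ can be written as $\bar g=\bar k\bar c$ with $\bar k\in\bar K$ and $\bar c\in C_{\bar G}(A)$, and the commutator identity $[xy,z]=[x,z]^{y}[y,z]$ yields $[\bar g,a]=[\bar k,a]^{\bar c}[\bar c,a]$ for each $a\in A$. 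Here $[\bar k,a]=1$ because $A$ centralizes $\bar K$, and $[\bar c,a]=1$ because $\bar c\in C_{\bar G}(A)$; hence $[\bar g,a]=1$ for all $\bar g$ and all $a$. Thus $A$ centralizes $\bar G$, so $\bar K=[\bar G,A]=1$, as required.

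The step carrying the real content is the coprime factorization $\bar G=[\bar G,A]\,C_{\bar G}(A)$, and I expect this to be the main obstacle: it is the only point where the hypothesis $(|G|,|A|)=1$ is genuinely used, resting on the lifting of $A$-fixed points through $A$-invariant quotients (equivalently, on the vanishing of the first coprime cohomology, via a Schur--Zassenhaus argument). Since this factorization is itself part of the standard coprime-action package of \cite{gore}, a cleaner presentation would be to cite it directly and then deduce the stated equality by the short quotient-and-commutator computation above. Either way, once the factorization is available the remaining steps are purely formal manipulations with commutator identities.
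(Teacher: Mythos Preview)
Your argument is correct and is essentially the standard textbook proof of this coprime-action fact. Note, however, that the paper does not supply its own proof of this lemma: it is simply quoted from Gorenstein's book (as Theorem 3.5.6) and used as a black box, so there is nothing to compare against beyond observing that your proof matches the one in the cited reference, including the reliance on the coprime factorization $G=[G,A]\,C_{G}(A)$ (which the paper itself later invokes separately as Theorem 5.3.5 of \cite{gore}).
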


 We will require the following two results taken from Guralnick \cite{Gur}.
\begin{theorem}[\cite{Gur}, Theorem A]
\label{2-generation}
Let $P$ be a Sylow $p$-subgroup of a finite group $G$. If $P\cap G'$ is abelian and  can be generated by at most two elements, then   $P\cap G'$ consists  entirely  of commutators. 
\end{theorem}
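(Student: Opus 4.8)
The plan is to show that each individual element $x\in A:=P\cap G'$ is a single commutator, arguing by induction on $|G|$. The engine I would use is the standard reformulation that $x$ is a commutator if and only if there is $g\in G$ with $g$ conjugate to $gx$: indeed $[g,h]=g^{-1}g^{h}$, so $x=[g,h]$ exactly when $gx=g^{h}$. Equivalently, by the Frobenius formula, $x$ is a commutator precisely when the character sum $\sum_{\chi\in\mathrm{Irr}(G)}\chi(x)/\chi(1)$ is nonzero. Either formulation reduces the problem to producing, for each $x$, a suitable conjugating configuration, and the whole difficulty is to do this uniformly over all of $A$.

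First I would describe $A$ explicitly through the focal subgroup theorem, which gives $A=\langle\, u^{-1}u^{g}\mid u\in P,\ u^{g}\in P\,\rangle$; every generator $u^{-1}u^{g}=[u,g]$ is itself a commutator lying in $P$. Thus $A$ is automatically \emph{generated} by commutators, and the real content of the theorem is to upgrade this to the assertion that $A$ \emph{consists of} commutators, which is exactly where the hypotheses that $A$ be abelian and at most $2$-generated must enter. Next I would carry out the usual coprime reduction: passing to $\bar G=G/O_{p'}(G)$ and lifting a commutator from $\bar G$ back to $G$ by coprime-action arguments of the type in Lemma \ref{gorenstein}, so that one may assume $O_{p'}(G)=1$. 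Since $A\cong C_{p^{a}}\times C_{p^{b}}$ has a very restricted automorphism group, a minimal counterexample $G$ is forced into a short list of structures, and I would dispose of the solvable pieces by induction.

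The main obstacle is the heart of the matter: converting a product of the focal commutators above into a \emph{single} commutator while simultaneously controlling both cyclic coordinates of $A$. I would attack this directly via the conjugacy criterion, using Alperin's fusion theorem to realize the required displacement $g^{-1}g^{h}=x$ one $p$-local subgroup at a time and then splicing the resulting data together; the rank-$2$ hypothesis is what keeps the two coordinates from interfering and bounds the amount of fusion to be tracked. If the explicit construction resists, the fallback is to prove that the Frobenius character sum is nonzero by bounding the contribution of the nonlinear characters, where again the bound on the rank of $A$ caps the possible cancellation. I would expect the odd-$p$ and $p=2$ cases to need separate fusion analyses, with $p=2$ being the delicate one.
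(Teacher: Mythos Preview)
The paper does not prove this statement at all: Theorem~\ref{2-generation} is quoted verbatim from Guralnick \cite{Gur} and used as a black box, so there is no ``paper's own proof'' to compare against. What you have written is therefore not competing with anything in the present article; it is an outline of how one might reprove Guralnick's result.

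As an outline it is reasonable in spirit---the focal subgroup theorem, the conjugacy criterion $x=[g,h]\iff g\sim gx$, and the Frobenius character sum are indeed the standard tools in this area, and Guralnick's original argument does lean on focal/transfer ideas and on induction through carefully chosen subgroups. But your sketch has two genuine gaps that would have to be closed before it could be called a proof. First, the reduction to $O_{p'}(G)=1$ is not innocent: knowing that $\bar x$ is a commutator in $G/O_{p'}(G)$ gives only $[a,b]=xn$ with $n\in O_{p'}(G)$, and there is no general mechanism for absorbing the $p'$-perturbation $n$ into a single commutator equal to $x$; Lemma~\ref{gorenstein} does not do this. In fact Guralnick does \emph{not} reduce this way; he instead descends to a subgroup $H$ with $P\le H$ and $P\cap H'=P\cap G'$, which is the correct inductive move here. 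Second, you explicitly flag the ``main obstacle''---turning a product of focal commutators into one commutator across both cyclic coordinates---and then offer two alternative strategies (Alperin fusion splicing, or a character-sum estimate) without executing either. That is precisely the content of the theorem, and neither strategy is routine; the fusion route in particular would require a delicate case analysis that your sketch does not supply.

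In short: the paper gives no proof, and your proposal is a plausible plan but not yet a proof. If you want to make it one, replace the $O_{p'}$ reduction by Guralnick's subgroup descent and actually carry out the rank-$2$ endgame rather than deferring it.
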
  
\begin{lemma}[\cite{Gur}, Lemma 2.5]
\label{gur2} Suppose $G=\langle x,y, B\rangle$ is a finite group with $B$ an abelian subgroup such that  $G'\leq B$ and $[x,y]$ of order $n$. Then the subset $\{[x,y]^{e}b \mid b\in [G,B],(e,n)=1\}$ consists of commutators. 
\end{lemma}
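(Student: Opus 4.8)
The plan is to exploit the metabelian structure forced by the hypotheses and to reduce the statement to a module computation inside $B$. Since $G'\le B$, the subgroup $B$ is normal in $G$ and $G/B$ is abelian, generated by the images $\bar x,\bar y$ of $x,y$; thus $G$ is metabelian. Writing $B$ additively, conjugation by $x$ and by $y$ induces automorphisms $X,Y$ of $B$ which commute (because $\bar x\bar y=\bar y\bar x$ and $B$ acts trivially on itself), so $B$ becomes a module over $\mathbb{Z}[\langle X,Y\rangle]$. Two facts are then immediate and will be used repeatedly: $[G,B]=(X-1)B+(Y-1)B$ is contained in $G'=\mathbb{Z}[X,Y]c$, the submodule generated by $c=[x,y]$; and every element of $G'$ has additive order dividing $n$, since $nc=0$.

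Next I would derive a general commutator formula. For words $w_1,w_2$ in $x,y$ with conjugation actions $W_1,W_2$ and for $\beta,\gamma\in B$, a direct expansion using that $B$ is abelian and $G'\le B$ gives
\[
[w_1\beta,\,w_2\gamma]=[w_1,w_2]+(W_2-1)\beta-(W_1-1)\gamma .
\]
Hence, as $\beta,\gamma$ range over $B$, the commutators $[w_1\beta,w_2\gamma]$ fill the coset $[w_1,w_2]+(W_1-1)B+(W_2-1)B$. The point is that if $\bar w_1,\bar w_2$ generate $G/B$, then $(W_1-1)B+(W_2-1)B$ is exactly $[G,B]$: this submodule equals $I\cdot B$ for the augmentation ideal $I$ of $\mathbb{Z}[G/B]$, and $I$ is generated by $\bar w_1-1,\bar w_2-1$ as soon as $\bar w_1,\bar w_2$ generate $G/B$.

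It then remains to produce, for each admissible $e$, a pair $w_1,w_2$ whose commutator lies in the coset $c^e+[G,B]$ and whose images generate $G/B$. I would take $w_1=x^{D}$ and $w_2=y$, so that $[w_1,w_2]=(1+X+\cdots+X^{D-1})c\equiv Dc \pmod{[G,B]}$. Let $\bar n$ denote the order of $c$ modulo $[G,B]$, a divisor of $n$. Using $(e,n)=1$ I would choose, by the Chinese Remainder Theorem, an integer $D$ with $D\equiv e\pmod{\bar n}$ and $\gcd(D,|G/B|)=1$; this is possible precisely because every prime $p\mid\bar n$ divides $n$ and therefore does not divide $e$, so the congruence $D\equiv e$ never forces $p\mid D$. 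For such $D$ we have $\gcd(D,\mathrm{ord}(\bar x))=1$, so $\bar x^{\,D},\bar y$ generate $G/B$, and the coset above becomes exactly $c^e+[G,B]$. By the commutator formula every element $c^e b$ with $b\in[G,B]$ is then realized as a commutator $[x^{D}\beta,\,y\gamma]$.

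The main obstacle is the simultaneous control required in this last step: one needs the commutator $[w_1,w_2]$ to realize the prescribed power $c^e$ while the images $\bar w_1,\bar w_2$ continue to generate $G/B$, so that the full freedom $[G,B]$ remains available to absorb the arbitrary $b$. These two demands pull against each other, and the coprimality hypothesis $(e,n)=1$ is exactly what reconciles them, through the choice of a determinant $D$ that is congruent to $e$ modulo $\bar n$ yet coprime to $|G/B|$.
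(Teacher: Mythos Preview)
The paper does not prove this lemma at all; it is quoted from Guralnick \cite{Gur} and used as a black box. So there is no ``paper's proof'' to compare against, and your argument must stand on its own. It does, and it is essentially the classical module-theoretic proof: the commutator formula, the identification $(W_1-1)B+(W_2-1)B=[G,B]$ whenever $\bar w_1,\bar w_2$ generate $G/B$, and the choice of $D$ via CRT are all correct and put together cleanly.

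One cosmetic correction: the two side remarks you make in the first paragraph---that $G'=\mathbb{Z}[X,Y]c$ and that every element of $G'$ has order dividing $n$---are not true in general and should be deleted. For a counterexample take $y=1\in B$ (which the hypotheses allow): then $c=[x,y]=1$, $n=1$, but $G'=[G,B]$ need not be trivial. Fortunately neither assertion is used anywhere in your actual argument: you only need $c\in B$ (so that $Ic\subseteq IB=[G,B]$) and $\bar n\mid n$ (so that $(e,n)=1$ implies $(e,\bar n)=1$), both of which are immediate. Once those two sentences are removed, the proof is complete.
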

In the present paper Lemma \ref{gur2} will be used in the following special form.
\begin{lemma}\label{Guralnick2}
Let $A=\langle x,y\rangle$ be a finite abelian $2$-generator group acting on a finite abelian group $B$. Then every element of $[B,A]$ is a commutator.
\end{lemma}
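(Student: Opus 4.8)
The plan is to recognize this as a direct specialization of Lemma \ref{gur2}. First I would pass to the semidirect product $G=B\rtimes A$, formed from the given action; then $G=\langle x,y,B\rangle$, the subgroup $B$ is abelian and normal in $G$, and I claim $G'=[B,A]\le B$. To see this, note that modulo $[B,A]$ the images of $A$ and $B$ centralize each other and are each abelian, so $G/[B,A]$ is abelian and hence $G'\le[B,A]$; the reverse inclusion $[B,A]\le G'$ is immediate. Thus the hypotheses of Lemma \ref{gur2} (an abelian $B$ with $G'\le B$) are met.

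The key point is the effect of $A$ being abelian: the commutator $[x,y]$ is trivial, so in the notation of Lemma \ref{gur2} its order is $n=1$. Then the coprimality condition $(e,n)=1$ holds for every $e$ while $[x,y]^{e}=1$, so the set $\{[x,y]^{e}b \mid b\in[G,B],\,(e,n)=1\}$ furnished by Lemma \ref{gur2} collapses to $[G,B]$ itself. Finally I would identify this subgroup: since $B$ is abelian and normal, $[G,B]=[BA,B]=[A,B]=[B,A]$. Therefore Lemma \ref{gur2} asserts exactly that every element of $[B,A]$ is a commutator in $G=B\rtimes A$, which is the desired conclusion.

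I do not anticipate a genuine obstacle, since the statement is essentially Lemma \ref{gur2} read in the degenerate case $n=1$. The only points requiring care are the routine verification that $G'$ lands inside the abelian subgroup $B$, and the bookkeeping showing that the somewhat opaque set in Lemma \ref{gur2} reduces to the whole of $[B,A]$ once the generating commutator $[x,y]$ becomes trivial.
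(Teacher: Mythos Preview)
Your proposal is correct and follows essentially the same approach as the paper: form the semidirect product $G=BA$, observe $G'\le B$, and apply Lemma~\ref{gur2} to conclude that $[G,B]$ (hence $[B,A]$) consists of commutators. You simply spell out in more detail the points the paper leaves implicit, namely why $G'\le B$, why $n=1$ collapses the set in Lemma~\ref{gur2} to $[G,B]$, and why $[G,B]=[B,A]$.
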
 
\begin{proof}
Put $G=BA$. It is clear that $G'\leq B$. So it follows from Lemma \ref{gur2}  that $[G,B]$ consists of commutators. In particular, $[B,A]$ consists of commutators.
\end{proof}

\begin{lemma}\label{eleme}
Let $G$ be a group and $N$ a normal subgroup of $G$. Set $K=G/C_G(N)$ and consider the natural action of $K$ on $N$. Then $[N,G]=[N,K]$.
\end{lemma}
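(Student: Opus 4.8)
The plan is to observe that both $[N,G]$ and $[N,K]$ are generated by literally the same subset of $N$, so the identity reduces to unwinding the definitions. First I would fix notation for the natural action: since $C_G(N)$ is by definition the kernel of the conjugation action of $G$ on $N$, this action factors through the quotient $K=G/C_G(N)$, and for $k=gC_G(N)\in K$ and $n\in N$ one sets $n^k=n^g=g^{-1}ng$. This is well defined because any two representatives of $k$ differ by an element of $C_G(N)$, which fixes $n^g\in N$, the latter lying in $N$ by normality of $N$. I would also record at this stage that, because $N$ is normal, $[N,G]$ is in fact a subgroup of $N$, since each generator $[n,g]=n^{-1}(n^g)$ lies in $N$.

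Next I would compare the two generating sets directly. By definition the subgroup $[N,K]$ for the action of $K$ on $N$ is generated by the commutators $[n,k]=n^{-1}n^k$ with $n\in N$ and $k\in K$. Substituting the description of the action gives $[n,k]=n^{-1}n^g=[n,g]$ for any $g$ with $gC_G(N)=k$, which agrees with the group commutator in $G$ under the convention $[x,y]=x^{-1}y^{-1}xy$ fixed in the introduction. Since the quotient map $G\to K$ is surjective, as $g$ ranges over $G$ its image $k$ ranges over all of $K$, so the set $\{[n,k]\mid n\in N,\ k\in K\}$ coincides exactly with $\{[n,g]\mid n\in N,\ g\in G\}$. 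Passing to the subgroups they generate then yields $[N,K]=[N,G]$.

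There is no genuine obstacle here: the statement is a direct verification, and the only points that require a moment's care are checking that the conjugation action descends to $K$ (immediate from the definition of $C_G(N)$) and matching the action-theoretic commutator $[n,k]=n^{-1}n^k$ with the ordinary group commutator $[n,g]$ in $G$. Once these conventions are aligned, the two generating sets are identical and the equality follows at once.
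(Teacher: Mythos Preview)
Your proof is correct and follows the same approach as the paper's: both arguments rest on the single observation that for $n\in N$ and $g\in G$ one has $[n,g]=[n,gC_G(N)]$, so the generating sets of $[N,G]$ and $[N,K]$ coincide. The paper records this in one line, while you spell out the well-definedness of the action and the matching of conventions more carefully, but there is no substantive difference.
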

\begin{proof} If $n\in N$ and $g\in G$, we have $[n,g]=[n,gC_G(N)]$. Thus, the equality $[N,G]=[N,K]$ follows.
\end{proof}

We denote by $\gamma_\infty(G)$ the intersection of all terms of the lower central series of $G$. It is clear that a finite group $G$ is nilpotent if and only if $\gamma_\infty(G)=1$. Therefore a profinite group $G$ is pronilpotent if and only if $\gamma_\infty(G)=1$. By a well-known property of finite groups $\gamma_\infty(G)$ is generated by all commutators $[x,y]$, where $x$ and $y$ have mutually coprime orders (see for example \cite[Theorem 2.1]{Pavel1}). 
\begin{lemma}\label{fakt}
Let $G=AB$ be a finite group that is a product of two subgroups $A$ and $B$ with $(|A|,|B|)=1$. Suppose that $p_1<p_2$ whenever $p_1\in \pi(A)$ and $p_2\in \pi(B)$. If $\gamma_\infty(G)$ is cyclic, then $B$ is normal in $G$.
\end{lemma}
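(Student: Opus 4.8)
The plan is to set $N=\gamma_\infty(G)$ and exploit that $N$ is a cyclic normal subgroup with $G/N$ nilpotent. Since $G=AB$ with $(|A|,|B|)=1$, we have $A\cap B=1$, so $|G|=|A||B|$ and $\pi(G)=\pi(A)\cup\pi(B)$ is a disjoint union; in particular $B$ is a Hall $\pi(B)$-subgroup of $G$. Because $N$ is cyclic, it splits as a direct product $N=N_A\times N_B$ of its Hall $\pi(A)$- and $\pi(B)$-subgroups, both of which are characteristic in $N$ and hence normal in $G$. My goal is first to show that $BN$ is normal in $G$, then to identify $BN$ with the direct product $N_A\times B$, and finally to recognize $B$ as a characteristic subgroup of $BN$.

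First I would locate $N_B$ and $N_A$ relative to $B$. Since $N_B$ is a normal $\pi(B)$-subgroup, the product $N_BB$ is a $\pi(B)$-subgroup containing the Hall $\pi(B)$-subgroup $B$, forcing $N_B\le B$; consequently $B\cap N=N_B$. Passing to the nilpotent quotient $G/N$, the image $BN/N$ is a $\pi(B)$-subgroup of order $|B|/|N_B|$, which is exactly the full $\pi(B)$-part of $|G/N|$. Thus $BN/N$ is a Hall $\pi(B)$-subgroup of the nilpotent group $G/N$, and such a subgroup is unique and normal. Hence $BN$ is normal in $G$. Since $N_B\le B$ we have $BN=BN_A$, and because $N_A$ is a normal $\pi(A)$-subgroup with $N_A\cap B=1$, the subgroup $M:=BN=BN_A$ is a normal subgroup of $G$ of the form $N_A\rtimes B$.

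The main obstacle is the final step: showing that $B$ actually centralizes $N_A$, so that the semidirect product $M=N_A\rtimes B$ is in fact direct. Here the hypothesis on the ordering of primes is essential. Conjugation inside $M$ gives a homomorphism $B\to\mathrm{Aut}(N_A)$ whose image is a $\pi(B)$-group. On the other hand $N_A$ is cyclic, say of order $\prod_i p_i^{a_i}$ with $p_i\in\pi(A)$, so $|\mathrm{Aut}(N_A)|=\prod_i p_i^{a_i-1}(p_i-1)$; every prime dividing this number is either some $p_i\in\pi(A)$ or a prime divisor of some $p_i-1$, and in either case it does not exceed $\max\pi(A)$, hence lies strictly below every prime of $\pi(B)$. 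Therefore $|\mathrm{Aut}(N_A)|$ is coprime to $|B|$, the image of $B$ is trivial, and $[N_A,B]=1$. It follows that $M=N_A\times B$, so $B$ is precisely the set of $\pi(B)$-elements of $M$ and is therefore characteristic in $M$. As $M$ is normal in $G$, the subgroup $B$ is normal in $G$, which is what we wanted to prove.
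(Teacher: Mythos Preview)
Your proof is correct. It differs from the paper's in that the paper argues by induction on $|G|$: in a minimal counterexample one finds a normal subgroup $N$ of prime order $p$ inside $\gamma_\infty(G)$, applies the inductive hypothesis to $G/N$ to get $NB\trianglelefteq G$, and then (if $p\in\pi(A)$) uses $p<\min\pi(B)$ to conclude $[N,B]=1$, whence $B$ is characteristic in $NB=N\times B$. You instead work directly with the whole of $N=\gamma_\infty(G)$: the nilpotence of $G/N$ gives $BN\trianglelefteq G$ in one step, and then you apply the prime-size argument to the entire cyclic $\pi(A)$-part $N_A$ via the Euler-$\phi$ formula for $|\mathrm{Aut}(N_A)|$. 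Your route avoids induction at the cost of a slightly more explicit computation of $\mathrm{Aut}(N_A)$; the paper's route only ever needs $\mathrm{Aut}(C_p)$, so the arithmetic is marginally simpler, but the underlying idea (primes dividing the relevant automorphism group are bounded by $\max\pi(A)$) is the same.
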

\begin{proof} Let $G$ be a counterexample of minimal possible order. Then $G$ is not nilpotent and so $\gamma_\infty(G)\neq1$. Thus, since $\gamma_\infty(G)$ is cyclic, it follows that $G$ has a normal subgroup $N$ of prime order $p$, for some prime $p$. By induction, the image of $B$ in  the quotient group $G/N$ is normal and therefore the subgroup $NB$ is normal in $G$. If $p\in\pi(B)$, then $N\leq B$ and we have nothing to prove. Suppose that $p\in\pi(A)$. Since $p$ is smaller than any prime divisor of $|B|$, it follows that $[N,B]=1$,  and so  $NB=N\times B$. Therefore $B$ is characteristic in $NB$, hence normal in $G$ and this leads to a contradiction. 
\end{proof}

We will now prove that a profinite group is covered by countably many procyclic subgroups if and only if it is finite-by-procyclic. The structure of procyclic groups is well-known (cf \cite[Theorem 2.7.2]{riza}). We will use the fact that a procyclic group $G$ such that $|\pi(G)|<\infty$ has only countably many closed subgroups.

\begin{proposition}\label{cove}
A profinite group $G$ is covered by countably many procyclic subgroups if and only if $G$ is finite-by-procyclic.\end{proposition}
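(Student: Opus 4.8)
The statement is an equivalence, and the natural first move is to record a convenient reformulation: \emph{a profinite group is finite-by-procyclic if and only if it is virtually procyclic and has finite commutator subgroup}. One implication is immediate: if $N$ is finite normal with $G/N$ procyclic, then $G'\le N$ is finite, and lifting a topological generator of $G/N$ gives a procyclic subgroup $P$ with $G=PN$, so $[G:P]\le|N|$ and $G$ is virtually procyclic. For the converse, if $G'$ is finite and $P\le G$ is open procyclic, then $G/G'$ is abelian and virtually procyclic, hence finite-by-procyclic by Lemma~\ref{virtprocyclic}, and pulling the relevant finite normal subgroup back through the finite group $G'$ shows that $G$ is finite-by-procyclic. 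With this criterion in hand both directions reduce to controlling $G'$ together with the virtually procyclic property.

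For the direction that a finite-by-procyclic $G$ is covered by countably many procyclic subgroups, I would fix an open normal procyclic subgroup $P_0$ (the core of the $P$ above) and write $G$ as a finite union of cosets $P_0t$. Since $G'$ is finite, $[P_0,t]\le G'$ is finite, so $C_{P_0}(t)$ is open in $P_0$; grouping the elements $ut$ (for $u\in P_0$) of the coset $P_0t$ according to the finitely many cosets of $C_{P_0}(t)$, each group lies in a single abelian subgroup $A_{j}=\overline{\langle v_jt\rangle\,C_{P_0}(t)}$, which is virtually procyclic and hence finite-by-procyclic by Lemma~\ref{virtprocyclic}. This reduces the problem to the abelian case, where I would split $A$ as $B\times D$ with $B$ procyclic on the ``good'' primes and $D$ a finite-by-procyclic group with $\pi(D)$ finite. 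Since $D$ is virtually procyclic with $\pi(D)$ finite it has only countably many closed subgroups (a routine extension of the fact recalled above), so $D=\bigcup_m D_m$ with each $D_m$ procyclic; and because $\pi(B)\cap\pi(D_m)=\varnothing$, each $B\times D_m$ is procyclic (cf.\ Lemma~\ref{cyclicprod}). Thus $A$, and therefore $G$, is covered by countably many procyclic subgroups.

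The substantive direction is the converse. By the Baire Category Theorem one of the countably many closed procyclic subgroups covering $G$ is open, so $G$ is virtually procyclic; let $P_0$ be an open normal procyclic subgroup (a core) and put $K=G/C_G(P_0)$, a finite group acting on $P_0$. The heart of the argument --- and the step I expect to be the main obstacle --- is to show that this action is ``finite'', that is, $[P_0,G]$ is finite. The key computation is that for $g\in G$ and $u,u'\in P_0$ the elements $gu$ and $gu'$ commute precisely when $u^{-1}u'\in C_{P_0}(g)$; hence any \emph{abelian} (in particular procyclic) subgroup meets the coset $gP_0$ in elements $gu$ whose $u$ all lie in a single coset of $C_{P_0}(g)$. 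Since the countably many covering subgroups must cover $gP_0$, the index $[P_0:C_{P_0}(g)]$ is countable, so $C_{P_0}(g)$ is open (a countable profinite group is finite). As $K$ is finite and generated by finitely many such $g$, the subgroup $C_{P_0}(K)$ is open and $[P_0,G]=[P_0,K]$ is finite (Lemma~\ref{eleme}).

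To finish, I would pass to $\bar G=G/[P_0,G]$, in which the image of $P_0$ is central of finite index; by Schur's theorem $\bar G'$ is finite, and since $[P_0,G]$ is finite it follows that $G'$ is finite. Combined with the fact that $G$ is virtually procyclic, the criterion from the first paragraph yields that $G$ is finite-by-procyclic, completing the proof.
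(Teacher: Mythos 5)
Your proof is correct, and it takes a genuinely different route from the paper's, most visibly in the hard direction. After Baire, the paper takes an open characteristic procyclic subgroup $K$ and runs an induction on the index $[G:K]$: it reduces to the situation where every proper subgroup between $K$ and $G$ is abelian, distinguishes the cases where $G/K$ is generated by its proper subgroups (then $K\le Z(G)$ and Schur's theorem applies) and where $G/K$ is cyclic, and in the cyclic case builds a central subgroup $T$ from the powers $(ax)^n$ and excludes $K/T$ infinite because uncountably many torsion elements of the coset $aK$ cannot lie in countably many procyclic subgroups. You replace all of this by the single commuting criterion inside a coset $gP_0$: each abelian covering subgroup meets $gP_0$ within one coset of $C_{P_0}(g)$, so a countable cover forces $[P_0:C_{P_0}(g)]$ to be countable, hence finite. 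This is shorter, needs no induction or passing to quotients, and isolates a reusable principle (a closed subgroup of countable index in a profinite group is open); your cardinality step and the paper's torsion-counting step are cousins, both being Baire-type arguments, but yours applies uniformly to every $g$ rather than only in a terminal cyclic case. In the easy direction the ingredients coincide (a coprime splitting plus countability of the procyclic subgroups of the part with finitely many primes), but the organization differs: the paper splits $G$ itself as $D\times E$ using $\pi=\pi(N)\cup\pi(\text{Aut}(N))$, whereas you first cover each coset of $P_0$ by finitely many abelian finite-by-procyclic subgroups and only then split each abelian piece. Two of your steps deserve one more line each: the finiteness of $[P_0,K]$ does not follow formally from Lemma \ref{eleme} alone, but is easy --- for each $g$ the map $u\mapsto[u,g]$ is a homomorphism on the abelian normal subgroup $P_0$ with kernel $C_{P_0}(g)$, so $[P_0,g]\cong P_0/C_{P_0}(g)$ is finite and $[P_0,K]$ is a product of finitely many finite subgroups of $P_0$; in fact, since $C_{P_0}(K)\le Z(G)$ is open, Schur's theorem applied directly to $G$ already gives $G'$ finite, which would let you delete your final paragraph. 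Likewise, the ``routine extension'' giving countably many closed subgroups of $D$ is genuinely the same counting as in the paper's own easy direction (every procyclic subgroup of $D$ has the form $\overline{\langle a_ix\rangle}$ with $x$ in the finite torsion subgroup), so it is legitimate but worth writing out.
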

\begin{proof}  Suppose first that $G$ has a finite normal subgroup $N$ such that $G/N$ is procyclic. Set $\pi=\pi(N)\cup\pi(\text{Aut}(N))$. Of course, $\pi$ is a finite set of primes. Let $D$ be the subgroup of $G$ generated by all $\pi$-elements and $E$ the subgroup of $G$ generated by all $\pi'$-elements. It is clear that $D=O_\pi(G)$ and $E=O_{\pi'}(G)$. Thus $G=D\times E$.

Let $a\in D$ be an element such that $aN$ is a generator of $D/N$. We know that $\langle a\rangle$ has at most countably many closed subgroups. Let $1=A_1,A_2,\dots$ be the closed subgroups of $\langle a\rangle$. In each $A_i$ we choose a generator $a_i$. Let $B$ be any procyclic subgroup in $D$. There exists $i\geq1$ such that $BN=A_iN$. Clearly, $B=\langle a_ix\rangle$ for a suitable $x\in N$. Since there are at most countably many pairs $(a_i,x)$, it follows that $D$ has only countably many procyclic subgroups, say $D_1,D_2,\dots$. Recall that $G=D\times E$. We now easily deduce that $G$ is covered by countably many procyclic subgroups, each of the form $D_i\times E$. Thus, we proved that if $G$ has a finite normal subgroup $N$ such that $G/N$ is procyclic, then $G$ is covered by countably many procyclic subgroups. Let us now prove the converse. 

Assume that $G$ is covered by countably many procyclic subgroups. We wish to show that $G$ is finite-by-procyclic. By \cite[Theorem 1.1]{Pavel2} $G'$ is finite.  We can pass to the quotient $G/G'$ and without loss of generality assume that $G$ is abelian. Then, by Lemma \ref{virtprocyclic}, $G$ is finite-by-procyclic, as required. The proof is now complete.
\end{proof}

\section{On virtually abelian groups}

 It is clear that a profinite group $G$ has rank one if and only if $G$ is procyclic. It was shown in \cite[Theorem 2]{DMS} that if all commutators in $G$ are contained in a union of countably many subgroups of finite rank, then the rank of $G'$ is finite.   

Let $G$ be, as in Theorem \ref{mainth}, a profinite group in which the set of all commutators is contained in a union of countably many procyclic subgroups. In the course of proving Theorem \ref{mainth} we will often use some simple arguments that show that certain subgroups of $G$ can be assumed procyclic. We will now formalize those arguments as follows.

\begin{rem}
\label{remark0}
Suppose that $G$ has a subgroup $M$ such that every element of $M$ is a commutator.  It follows that the subgroup $M$ is covered by countably many procyclic subgroups. By Baire Category Theorem one of those procyclic subgroups is open. Thus, by Proposition  \ref{cove}, $M$  is finite-by-procyclic. 
\end{rem}

\begin{rem}
\label{remark5}
\noindent Suppose that $G$ has an abelian normal subgroup $A$. For every element $x\in G$ the subgroup $[A,x]$ consists entirely of commutators. Therefore, by Remark \ref{remark0}, the subgroup $[A,x]$  is finite-by-procyclic.
\end{rem}

\begin{rem}
\label{remark2}
\noindent Suppose that $G$ has a normal abelian virtually procyclic subgroup $V$. The set of all torsion elements in $V$ forms a finite characteristic subgroup $M$ such that $V/M$ is procyclic. Then $G'$ is finite-by-procyclic if and only if  the commutator subgroup of $G/M$ is finite-by-procyclic. Thus, we can pass to the quotient $G/M$ and, without loss of generality, assume that $V$ is procyclic.
\end{rem}

\begin{rem}
\label{remark6}
\noindent Let $G$ be a profinite group and $T$ be a procyclic subgroup of $G$. Then $G$ contains a maximal procyclic subgroup $S$ such that $T\leq S$. Indeed, suppose that this is false and write $T=T_{1}<T_{2}<\cdots$, where $T_{i}$ are procyclic subgroups of $G$.  Let $T_{0}$ be the topological closure of $\bigcup_{i}\,T_{i}$.  In any finite quotient of $G$ the image of $T_{0}$ is cyclic and therefore $T_{0}$ is topologically generated by just one element. Hence, $T_{0}$ is procyclic and this proves the claim.
\end{rem}

We will now deal with the following particular case of Theorem \ref{mainth}.
 \begin{lemma}\label{143} Let $G$ be a profinite group in which all commutators are contained in a union of countably many procyclic subgroups. If $G'$ is an abelian pro-$p$ subgroup, then $G'$ is finite-by-procyclic.
\end{lemma}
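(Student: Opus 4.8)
\emph{Reductions.} The plan is to reduce to a torsion-free rank computation. Since a procyclic subgroup has rank one, all commutators are in particular covered by countably many subgroups of finite rank, so by \cite[Theorem 2]{DMS} the group $A:=G'$ has finite rank and is therefore finitely generated. Being a finitely generated abelian pro-$p$ group, $A$ is a direct product of finitely many procyclic subgroups, so its torsion subgroup $T$ is finite and characteristic. Passing to $G/T$ as in Remark \ref{remark2} (and recovering the conclusion for $G$ via Lemma \ref{normal-characteristic}), I may assume $A\cong\mathbb{Z}_p^{\,s}$ is torsion-free; it then suffices to show $s\le1$. Here $\bar G:=G/A$, and hence $K:=G/C_G(A)$, is abelian, so $A$ is a $\mathbb{Z}_p[K]$-module and $[A,G]=[A,K]$ by Lemma \ref{eleme}.

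\emph{The submodule $[A,G]$ has rank at most one.} For $g_1,g_2\in G$ I would apply Lemma \ref{gur2} inside $\langle g_1,g_2,A\rangle$, where $A$ is abelian and contains the derived subgroup: in every finite quotient the coset $[g_1,g_2]\,[A,\langle g_1,g_2\rangle]$ consists of commutators, and since the set of commutators is closed (a continuous image of the compact space $G\times G$) the same holds in $G$. This coset is covered by countably many procyclic subgroups, so Baire's theorem forces an open subgroup of $[A,\langle g_1,g_2\rangle]$ into a single procyclic subgroup; hence $[A,\langle g_1,g_2\rangle]$, and in particular $[A,g_1]+[A,g_2]$, has rank at most one. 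As $A$ is torsion-free, all the $[A,g]$ then lie in one common rank-one subspace, so $Z:=[A,G]$ has rank at most one.

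\emph{The central case.} If $Z=0$ then $A$ is central, the commutator map $\bar B\colon\bar G\times\bar G\to A$ is alternating and bilinear, and applying Baire to the closed cover of $\bar G\times\bar G$ by the sets $\{(\bar x,\bar y):\bar B(\bar x,\bar y)\in C_i\}$ produces an open subgroup $W\le\bar G$ with $\bar B(W,W)$ contained in one procyclic $C$. Bilinearity combined with $[\bar G:W]<\infty$ gives $m^2A\subseteq\langle\bar B(W,W)\rangle\subseteq C$ for a suitable $m$, whence $s\le1$.

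\emph{The main obstacle.} The hardest case is $Z\ne0$ with the hypothetical $s\ge2$, so that $A/Z$ also has positive rank. I would first produce a single $y\in G$ for which $[A,y]$ is infinite \emph{and} the continuous homomorphism $\phi_y\colon\bar G\to A/Z,\ \bar x'\mapsto\overline{[x',y]}$ has infinite image; such a $y$ is obtained by combining an element acting nontrivially on $A$ with a pair whose commutator has infinite order in $A/Z$. For each $x'\in G$ the set $[x',y]+[A,y]=\{[x'a,y]:a\in A\}$ consists of commutators and is a coset of $[A,y]$ sitting over the level $\phi_y(\bar x')\in A/Z$; since $\phi_y$ has uncountable image these levels are uncountable, whereas only countably many $C_i$ are available. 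Running Baire on each such coset and pigeonholing the uncountably many levels into the countably many $C_i$, some $C_{i_0}$ must contain finite-index cosets of $[A,y]$ at two distinct levels, which forces $C_{i_0}$ to have rank at least two — a contradiction. Thus $s\le1$, so $A$ is procyclic after the torsion reduction and $G'$ is finite-by-procyclic. The step I expect to be the crux is precisely this last one: ruling out rank exactly two by showing that the coexistence of an infinite ``module direction'' $[A,G]$ and an infinite ``bilinear direction'' $A/[A,G]$ makes the commutators fill a genuinely two-dimensional region that no countable family of procyclic subgroups can cover, the delicate point being the Baire-plus-pigeonhole bookkeeping.
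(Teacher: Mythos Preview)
Your argument is essentially correct but follows a genuinely different route from the paper's. After the common reduction to $A=G'\cong\mathbb{Z}_p^{\,s}$ torsion-free, the paper proceeds by induction on $s$: it picks a maximal $G$-normal procyclic $M\le A$, shows $A/M$ is torsion-free of strictly smaller rank (hence, by induction applied to $G/M$, procyclic), and then invokes Guralnick's Theorem~A (Theorem~\ref{2-generation}) to conclude that \emph{every} element of the now $2$-generated abelian $A$ is a commutator, so a single application of Baire finishes. Your approach instead first bounds the rank of $Z=[A,G]$ by one via Lemma~\ref{gur2}/Lemma~\ref{Guralnick2}, and then splits into the central case (a bilinear Baire argument on $\bar G\times\bar G$) and the non-central case (the ``levels plus pigeonhole'' argument). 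The paper's route is shorter and leans on the stronger Guralnick result; yours avoids both the induction and Theorem~\ref{2-generation}, trading them for two tailored Baire arguments, which is a legitimate and somewhat more self-contained alternative.

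One small imprecision in your final step: ``two distinct levels'' is not quite enough as stated, since $A/Z$ may have (finite) torsion, and two levels whose difference is torsion would not force the procyclic $C_{i_0}\cap A$ to have rank at least two. However, your pigeonhole actually produces \emph{uncountably} many levels landing in the same $C_{i_0}$; then $C_{i_0}\cap A\cong\mathbb{Z}_p$ would have infinite intersection with $Z$ (it contains an open subgroup of $[A,y]$) and simultaneously uncountable image in $A/Z$, which is impossible. Alternatively, replace $Z$ by its isolator in $A$ at the outset (still procyclic, by Lemma~\ref{virtprocyclic}), so that $A/Z$ is torsion-free and any two distinct levels already yield the contradiction.
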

\begin{proof}  By \cite[Theorem 2]{DMS} $G'$ has finite rank, say $r$. The lemma will be proved by induction on $r$. If $r=1$, then $G'$ is procyclic and there is nothing to prove.  

Assume that $r\geq 2$. Since finitely generated abelian profinite groups decompose as direct sums of procyclic subgroups, torsion elements of $G'$ form a finite normal subgroup, say $G_0$.  We can pass to the quotient $G/G_0$ and, without loss of generality, assume that $G'$ is torsion-free. Since $G'$ is abelian, Remark \ref{remark5} shows that the subgroup $[G',x]$ is finite-by-procyclic for all $x$ in $G$. Since  $G'$ is torsion-free it follows  that $[G',x]$ is infinite procyclic. This happens for every $x\in G$. We also note that since $G$ is metabelian, the subgroup $[G',x]$ is normal in $G$. 

Choose a maximal normal procyclic subgroup $M$ in $G'$. Suppose that $G'/M$ is not torsion-free and let $N/M$ be a finite subgroup  in $G'/M$. Since $G'/M$ is abelian of finite rank, every finite subgroup of $G'/M$ is contained in a finite characteristic subgroup. Hence, we can choose $N$ to be normal in $G$. By Lemma \ref{virtprocyclic} $N$ is finite-by-procyclic. Taking into account that $G'$ is torsion-free, we conclude that $N$ is procyclic. Since $M$ was chosen maximal, this leads to a contradiction. Hence, $G'/M$ is torsion-free. Therefore the rank of $G'/M$ is strictly less than that of $G'$. By induction, $G'/M$ is finite-by-procyclic. We already know that $G'/M$ is torsion-free. Therefore $G'/M$ is procyclic. By a profinite version of Theorem \ref{2-generation} it follows that  every element of $G'$ is a commutator. Hence, by Remark \ref{remark0}, $G'$ is  finite-by-procyclic. 
\end{proof}

As usual, the Frattini subgroup of a group $T$ is denoted by $\Phi(T)$. In the proof of the next  lemma we use the well-known Schur Theorem  that if  $G$ is a group whose center has finite index, then $G'$ is finite (\cite[Theorem 4.12]{Robinson}).  
\begin{lemma}\label{propcase}
 Let $G$ be a profinite group in which all commutators are contained in a union of countably many procyclic subgroups. If $G$ contains an open normal abelian pro-$p$ subgroup $A$, then $G'$ is finite-by-procyclic.
\end{lemma}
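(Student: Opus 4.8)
The plan is to reduce the statement to the assertion that $K:=G/C_G(A)$ is cyclic, after which Schur's theorem and Lemma~\ref{virtprocyclic} finish the argument quickly. Since $A$ is abelian we have $A\le C_G(A)$, so $C_G(A)$ is open and $K$ is finite. By \cite[Theorem~2]{DMS} the subgroup $G'$ has finite rank, hence so does $B:=G'\cap A$; being a finite-rank abelian pro-$p$ group, $B$ has finite torsion subgroup $\mathrm{tor}(B)$, which is characteristic in $B$ and therefore normal in $G$. Factoring it out (this affects neither the hypothesis nor whether $G'$ is finite-by-procyclic) I may assume that $B$ is torsion-free. The one piece of information I start from is Remark~\ref{remark5}: for every $x\in G$ the subgroup $[A,x]$ consists of commutators, so it is covered by countably many procyclic subgroups, hence is virtually procyclic, and thus has torsion-free rank at most $1$.

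The heart of the proof, and the step I expect to be the main obstacle, is to show that $K$ is cyclic. Writing $A$ additively, let $V:=[A,G]\otimes_{\mathbb Z_p}\mathbb Q_p=[A\otimes\mathbb Q_p,K]$ (Lemma~\ref{eleme}), a finite-dimensional $\mathbb Q_p$-space, as $[A,G]\le G'$ has finite rank, on which $K$ acts. Because $K$ is finite and $\mathbb Q_p$ has characteristic zero, the module $A\otimes\mathbb Q_p$ is semisimple and $V$ coincides with the sum of the nontrivial isotypic components of $A\otimes\mathbb Q_p$, so that $V^{K}=0$. Let $\bar K$ be the image of $K$ in $\mathrm{GL}(V)$. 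The rank bound above says that for each $g$ the operator $g-1$ has image of dimension at most $1$ on $A\otimes\mathbb Q_p$, hence on $V$; as $\bar K$ acts faithfully and without nonzero fixed vectors, every nonidentity element of $\bar K$ is a genuine pseudo-reflection, with a single nontrivial eigenvalue $\lambda_g$ that is a root of unity. Now $\sum_{g\in\bar K}g$ is $|\bar K|$ times the projection onto $V^{\bar K}=0$, so its trace vanishes; fixing an embedding of the relevant cyclotomic field into $\mathbb C$ and putting $s=\dim V$, $N=|\bar K|$, this reads $s+(N-1)(s-1)+\sum_{g\neq 1}\lambda_g=0$, whence $s+(N-1)(s-1)\le N-1$. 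If $s\ge 2$ the left side is at least $N+1$, a contradiction, so $s\le 1$, and then $\bar K$ embeds in $\mathbb Q_p^{\times}$ and is therefore cyclic. Finally $K=\bar K$: if $g\in K$ acts trivially on $A\otimes\mathbb Q_p$ then $[A,g]\subseteq\mathrm{tor}(A)\cap[A,G]\subseteq \mathrm{tor}(A)\cap B=0$, so $g=1$ by faithfulness. Hence $K$ is cyclic. (Alternatively, once one knows $K$ is abelian the bound $s\le 1$ can be extracted from Lemma~\ref{Guralnick2}: commuting pairs force all the rank-one subgroups $[A,x]$ into a common line.)

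It remains to assemble the conclusion, which is now routine. As $K$ is cyclic it is abelian, so $G'\le C:=C_G(A)$. Since $A\le Z(C)$ and $[C:A]$ is finite, $C$ is central-by-finite, and Schur's theorem gives that $C'$ is finite; being characteristic in $C\trianglelefteq G$, it is normal in $G$. Passing to $G/C'$ (which preserves the hypothesis and, $C'$ being finite, the property of having finite-by-procyclic commutator subgroup) I may assume that $C$ is abelian. Now $C$ is an open normal abelian subgroup with $G/C\cong K$ cyclic; choosing $a$ with $G=\langle a,C\rangle$ and using that $C$ is abelian, one checks that $C/[C,a]$ is central in $G/[C,a]$ with cyclic quotient, so $G/[C,a]$ is abelian and $G'=[C,a]$. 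By Remark~\ref{remark5} the abelian subgroup $[C,a]$ consists of commutators, hence is covered by countably many procyclic subgroups and is virtually procyclic; by Lemma~\ref{virtprocyclic} it is finite-by-procyclic. Therefore $G'$ is finite-by-procyclic, as required.
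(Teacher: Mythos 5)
Your proof is correct, and it takes a genuinely different route from the paper's. The paper argues by induction on the index $[G:A]$ with a case analysis on the finite quotient $G/A$: when $[G:A]$ is prime or $G/A$ is non-simple it combines the inductive hypothesis with Lemma~\ref{143} (the abelian pro-$p$ case of the metabelian theorem, which rests on Guralnick's Theorem~\ref{2-generation}), Lemma~\ref{normal-characteristic}, Lemma~\ref{autocyclic} and Schur's theorem; when $G/A$ is non-abelian simple it lets $G/A$ act faithfully on $M/\Phi(M)$, where $M=[A,G]$, and derives a contradiction from the fact that a $p'$-element $g$ of $SL(M)$ has $[M,g]$ of dimension at least two, against the procyclicity of the subgroups $[A,x]$. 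You bypass the induction, the case analysis, and Lemma~\ref{143} altogether: after killing the finite torsion of $G'\cap A$ (legitimate by \cite[Theorem 2]{DMS}), you linearize over $\mathbb{Q}_p$, where Remark~\ref{remark5} says exactly that every element of $K=G/C_G(A)$ acts on $V=[A,G]\otimes\mathbb{Q}_p$ with $\mathrm{rank}(g-1)\le 1$, i.e.\ as a pseudo-reflection or trivially; semisimplicity gives $V^K=0$, and your averaging/trace estimate forces $\dim V\le 1$, so $K$ embeds into $\mathbb{Q}_p^{\times}$ and is cyclic (finite subgroups of the multiplicative group of a field are cyclic). Schur's theorem and Remark~\ref{remark5}, applied to $C_G(A)$ made abelian modulo a finite subgroup, then finish as you describe. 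In effect your trace argument is a characteristic-zero, all-cases-at-once version of the paper's $SL(M)$ step; what it buys is a shorter, uniform proof independent of Lemma~\ref{143} and of Guralnick's theorem, while both proofs still lean on \cite{DMS} and Baire's theorem. Two small points you should make explicit in a final write-up: faithfulness of $K$ on $V$ (not merely on $A\otimes\mathbb{Q}_p$) holds because the complement of $V$ in $A\otimes\mathbb{Q}_p$ is the trivial isotypic component, on which $K$ acts trivially by construction; and the eigenvalue bookkeeping uses that each $g$ is diagonalizable, which follows since $g$ has finite order and $\mathbb{Q}_p$ has characteristic zero.
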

\begin{proof}
Suppose that the result is false and take  a counterexample $G$ with the index $[G:A]$ as small as possible. If $[G:A]=1$, there is nothing to prove. If $[G:A]=q$, for some prime $q$, then $G$ is metabelian and the result follows from Lemma \ref{143}. So we assume that $[G:A]$ is not a prime number. 

Assume first that $G/A$ is not simple. Let $K/A$ be a proper non-trivial  normal subgroup of $G/A$. Since $G$ is a counterexample with $[G:A]$ as small as possible, it follows that $K'$ is finite-by-procyclic. Lemma \ref{normal-characteristic} tells us that $K'$ has a finite characteristic subgroup $N$ such that $K'/N$ is procyclic. It is clear that $G'$ is finite-by-procyclic if and only if the commutator subgroup of $G/N$ is finite-by-procyclic. Thus, we can pass to the quotient $G/N$ and, without loss of generality, assume that $K'$ is procyclic. Since the group of automorphisms of a procyclic group is abelian, we have $K'\leq Z(G')$. Since $A$ is open, it follows that the Sylow subgroups of $G$ corresponding to primes other than $p$ are finite. Suppose that $K$ is abelian. We pass to the quotient over the (finite) subgroup generated by all $p'$-elements of $K$ and assume that $K$ is a pro-$p$ group. Since the index $[G:K]$ is smaller than $[G:A]$, by induction we deduce that $G'$ is finite-by-procyclic. Thus, in the case where $K$ is abelian, $G'$ is finite-by-procyclic as required. This shows that $G'/K'$ is finite-by-procyclic.
Let $L$ be the minimal normal subgroup of $G'$  such that $L/K'$ is finite and $G'/L$ is procyclic. Since $K'$ is contained in $Z(G')$, it follows from the Schur Theorem that $L'$ is finite. We pass to the quotient over  $L'$ and assume that $L$ is abelian. Further, the argument in Remark \ref{remark2} allows us to assume that $L$ is procyclic, in which case $L\leq Z(G')$. Since also $G'/L$ is procyclic, it follows that  $G'$ is abelian. Now the result is immediate from Lemma \ref{143}. Thus, in the case where $G/A$ is not simple, or simple of prime order, we are done.

Assume  that $G/A$ is a non-abelian simple group. Of course, in this case we have $[A,G,G]=[A,G]$. Since $A$ is normal abelian, we apply Remarks \ref{remark5} and \ref{remark2} and assume that $[A,x]$ is  procyclic for every $x\in G$.  Here we use the fact that $G$ contains only finitely many subgroups of the form $[A,x]$. Put $M=[A,G]$. Suppose first that $M$ is procyclic. Then the equality $M=[M,G]$ implies that $M=1$. Therefore $G$ is central-by-finite and, by Schur's theorem $G'$ is finite. Thus, we may  assume that $M$ is of rank $n\geq 2$. Since $G/A$ acts faithfully on $M/\Phi(M)$, we have an embedding of $G/A$ in $GL(V)$, where $V=M/\Phi(M)$. However, for any $p'$-element $g$ of $SL(V)$ the dimension of $[M,g]$ must be at least two. This leads to a contradiction since $[A,x]$ is  procyclic for every $x\in G$. The proof is now complete. 
\end{proof}

We will now look at the case where the open normal abelian subgroup $A$ is not necessarily a pro-$p$ subgroup. Our immediate  goal is to show that $[A,G]$ is finite-by-procyclic. We denote by $O_{\pi}(G)$ the unique largest normal pro-$\pi$ subgroup of $G$. 
\begin{lemma}\label{AcomaG} 
Let $G$ be a profinite group in which all commutators are contained in a union of countably many procyclic subgroups. If $G$ contains an open normal abelian subgroup $A$, then $[A,G]$ is finite-by-procyclic.
\end{lemma}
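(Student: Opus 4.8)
The plan is to use the Sylow decomposition of the abelian group $A$ to reduce everything to the pro-$p$ situation already settled in Lemma~\ref{propcase}. Write $A=\prod_p A_p$, where $A_p$ is the Sylow pro-$p$ subgroup of $A$; each $A_p$ is characteristic in $A$ and hence normal in $G$, so $[A_p,G]\leq A_p$ and $[A,G]=\prod_p[A_p,G]$ is the corresponding Sylow decomposition of $[A,G]$. Since $[A,G]\leq G'$ and $G'$ has finite rank by \cite[Theorem~2]{DMS}, every $[A_p,G]$ has rank at most $r=\mathrm{rank}(G')$. Put $\sigma=\pi(G/A)$, a finite set. Because $A$ is abelian, $A\leq C_G(A_p)$, so $K_p:=G/C_G(A_p)$ is a finite quotient of $G/A$ and, by Lemma~\ref{eleme}, $[A_p,G]=[A_p,K_p]$; for $p\notin\sigma$ the order of $K_p$ is prime to $p$, so $K_p$ acts coprimely on $A_p$.

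First I would show that each $[A_p,G]$ is finite-by-procyclic. Passing to the quotient $\bar G=G/\prod_{q\neq p}A_q$, the image of $A$ is an open normal abelian pro-$p$ subgroup of $\bar G$, and the covering hypothesis is inherited by quotients (a commutator of $\bar G$ is the image of a commutator of $G$, and images of procyclic subgroups are procyclic). Hence Lemma~\ref{propcase} applies and $\bar G'$ is finite-by-procyclic. As $A_p\cap\prod_{q\neq p}A_q=1$, the subgroup $[A_p,G]$ maps isomorphically onto a closed subgroup of $\bar G'$, and a closed subgroup of a finite-by-procyclic group is again finite-by-procyclic; thus each $[A_p,G]\cong F_p\times C_p$ with $F_p$ a finite $p$-group and $C_p$ procyclic pro-$p$.

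The heart of the matter is to prove that $[A_p,G]$ is in fact \emph{procyclic} for all but finitely many $p$. Here I would use that, for $a,a'\in A$, one has $[a,xa']=[a,x]$, so $[A_p,x]$ depends only on the coset $xA$; as $G/A$ is finite there are only finitely many subgroups $[A,x]=\prod_p[A_p,x]$, and each of them is finite-by-procyclic by Remark~\ref{remark5}. Consequently, for each of these finitely many cosets the set of primes $p$ with $\mathrm{rank}[A_p,x]\geq 2$ is finite, so there is a cofinite set of primes $p\notin\sigma$ for which $\mathrm{rank}[A_p,x]\leq 1$ for \emph{every} $x\in G$. Fix such a $p$ and pass to $\bar A_p=A_p/\Phi(A_p)$, on which $K_p$ acts faithfully (coprimely). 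Since the action is coprime, $\mathrm{rank}[A_p,K_p]=\dim_{\mathbb F_p}[\bar A_p,K_p]$ and $\mathrm{rank}[A_p,x]=\dim_{\mathbb F_p}[\bar A_p,\bar x]=\mathrm{rank}(\bar x-1)$ for every $x$. Thus the problem becomes the linear-algebra statement: if $V=\bar A_p$ is a finite-dimensional $\mathbb F_p$-space and $K_p\leq GL(V)$ is a $p'$-group with $\mathrm{rank}(g-1)\leq 1$ for all $g\in K_p$, then $\dim[V,K_p]\leq 1$; this would give $\mathrm{rank}[A_p,G]\leq 1$, i.e. $[A_p,G]$ procyclic.

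Proving this last statement is the main obstacle. I would reduce to $V=[V,K_p]$ (so $C_V(K_p)=0$ by coprimeness) and argue that $V$ must be irreducible: a proper $K_p$-decomposition $V=U\oplus W$ would force every $g$ to centralize $U$ or $W$, writing $K_p$ as a union of two proper subgroups, which is impossible. For irreducible $V$ of dimension $\geq 2$ I would write each nontrivial $g$ as $g=1+v_g\otimes\phi_g$ with $\mathrm{Im}(g-1)=\langle v_g\rangle$ and $\ker(g-1)=\ker\phi_g$, and compute $gh-1$ to show that two nontrivial elements with distinct image-lines must share the same fixed hyperplane; a short case analysis then yields either a common fixed hyperplane for all of $K_p$ (contradicting $C_V(K_p)=0$) or $[V,K_p]$ of dimension $1$ (contradicting irreducibility of dimension $\geq 2$). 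Alternatively, the covering hypothesis can be invoked directly through Lemma~\ref{Guralnick2}, which exhibits elements of $[\bar A_p,\langle x,y\rangle]$ as commutators. Finally I would assemble the pieces: only finitely many primes $p$ (those in $\sigma$ together with the finite exceptional set above) give a non-procyclic factor, each of which is finite-by-procyclic, while all remaining $[A_p,G]$ are procyclic pro-$p$; by Lemma~\ref{cyclicprod} and the structure of procyclic groups the product $[A,G]=\prod_p[A_p,G]$ is the direct product of a finite group and a procyclic group, hence finite-by-procyclic.
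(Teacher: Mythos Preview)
Your argument is correct and follows essentially the same route as the paper: decompose $[A,G]$ into its Sylow pieces, show each $[A_p,G]$ is finite-by-procyclic via the pro-$p$ case already established, then prove that for all but finitely many primes (outside $\pi(G/A)$) the piece $[A_p,G]$ is actually procyclic by reducing to a finite $p'$-group acting on an $\mathbb{F}_p$-space with every $g-1$ of rank at most one. The only notable difference is that the paper dispatches this last step by quoting \cite[Lemma~2.11]{AS2} (which gives the dichotomy ``$K/C_K(P)$ cyclic or some $[P,g_0]$ non-procyclic''), whereas you supply a self-contained linear-algebra proof via the $1+v\otimes\phi$ description of pseudoreflections; your sketch of that case analysis is sound and in fact the irreducibility reduction is not even needed, since the two cases (common image line / common fixed hyperplane) already force $\dim V\leq 1$ once $C_V(K_p)=0$.
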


\begin{proof} Choose a prime $p\in\pi(A)$ and let $P$ be the Sylow pro-$p$ subgroup of $A$. Note that $[P,G]$ is the Sylow pro-$p$ subgroup of $[A,G]$.  Moreover, by Lemma \ref{eleme}, in the semidirect product of $P$ by $G/C_G(P)$ we have $[P,G]=[P,G/C_{G}(P)]$. Since $A$ is open in $G$, the semidirect product of $P$ by $G/C_G(P)$ is a virtually pro-$p$ group and it follows from Lemma \ref{propcase} that $[P,G]$ is finite-by-procyclic. It is now straightforward that for any finite set of primes $\sigma$ the subgroup $O_\sigma([A,G])$ is finite-by-procyclic. 

Let $\pi=\pi(G/A)$. Since $\pi$ is finite, it follows that $O_\pi([A,G])$ is finite-by-procyclic. We observe that $[A,G]=O_{\pi}([A,G])\times O_{\pi'}([A,G])$. Therefore it is sufficient to show that $O_{\pi'}([A,G])$ is finite-by-procyclic. We can pass to the quotient $G/O_{\pi}([A,G])$ and simply assume that $[A,G]=O_{\pi'}([A,G])$.

Let $K=G/C_G(A)$. Thus, $K$ is a finite group acting on $A$ by automorphisms.
Applying the argument in Remarks \ref{remark5} and \ref{remark2} to all subgroups of the form $[A,g]$ (there are only finitely many of them) we assume that $[A,g]$ is procyclic for any $g\in K$. Thus, a finite $\pi$-group $K$ acts faithfully and continuously on the profinite abelian $\pi'$-group $A$ in such a way that $[A,g]$ is procyclic for any $g\in K$. Suppose that $[A,K]$ is not procyclic. Then we can choose a Sylow pro-$p$ subgroup $P\leq[A,K]$ such that $P$ is not procyclic and $P=[P,K]$. By \cite[Lemma 2.11]{AS2} either the quotient group  $K/C_K(P)$ is cyclic, or otherwise, there exists an element $g_{0}\in K$ such that $[P,g_{0}]$ is not procyclic. Obviously under our assumptions the latter case is impossible so we conclude that $K/C_K(P)$ is cyclic. Write $K=\langle g,C_K(P)\rangle$ for some element $g\in K$. Then we have $P=[P,K]=[P,g]$. Since $[P,g]$ is procyclic, this is a contradiction. 
\end{proof}

Now we are ready  to prove the main result of this section.
\begin{theorem}\label{virtabeliancase}
Let $G$ be a profinite group in which all commutators are contained in a union of countably many procyclic subgroups.  If $G$ contains an open normal abelian subgroup $A$, then $G'$ is finite-by-procyclic.
\end{theorem}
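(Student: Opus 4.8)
The plan is to combine Lemma \ref{AcomaG}, which already supplies the hard structural information, with two applications of Schur's theorem. Write $V:=[A,G]$. Since $A$ is normal we have $V\leq A$, so $V$ is abelian and normal in $G$, and clearly $V\leq G'$; by Lemma \ref{AcomaG} it is finite-by-procyclic. First I would record that $G'/V$ is finite: as $[A,G]=V$, the image of $A$ in $G/V$ is central, and since $A$ is open the group $G/V$ is central-by-finite, so Schur's theorem gives that $(G/V)'=G'/V$ is finite. At this point the whole problem has been reduced to the task of promoting the normal, abelian, finite-by-procyclic subgroup $V$ of finite index in $G'$ to the statement that $G'$ itself is finite-by-procyclic.

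The decisive step is to arrange that $V$ is actually procyclic. Choose a finite subgroup $F\leq V$ with $V/F$ procyclic. Because $V\leq A$ and $A$ is open abelian, the centralizer $C_G(V)$ is open, so $F$ has only finitely many conjugates in $G$ and the normal closure $M:=\langle F^{g}\mid g\in G\rangle$ is a finite subgroup of the abelian group $V$, normal in $G$. As $V/M$ is a quotient of the procyclic group $V/F$ it is procyclic, and since $M$ is finite, normal in $G$ and contained in $G'$, the subgroup $G'$ is finite-by-procyclic if and only if the commutator subgroup of $G/M$ is. Passing to $G/M$ we may therefore assume from now on that $V=[A,G]$ is procyclic.

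With $V$ procyclic the argument closes quickly. The finite group $G/C_G(V)$ acts faithfully on the procyclic group $V$, so by Lemma \ref{autocyclic} it is abelian; consequently $G'\leq C_G(V)$, that is $[V,G']=1$, and since $V\leq G'$ we obtain $V\leq Z(G')$. Together with the finiteness of $G'/V$ this shows that $G'$ is central-by-finite, so Schur's theorem yields that $G''$ is finite. Passing to $G'/G''$, the image of $V$ is a central procyclic subgroup of finite index, so $G'/G''$ is abelian and virtually procyclic, hence finite-by-procyclic by Lemma \ref{virtprocyclic}. Pulling the finite part back through the finite subgroup $G''$ then shows that $G'$ is finite-by-procyclic, as required.

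I expect the only delicate point in this plan to be the reduction to procyclic $V$: one must replace the possibly non-invariant finite subgroup $F$ by a genuinely $G$-normal one without destroying the procyclic quotient, which is exactly what the finiteness of $G/C_G(V)$ makes possible. Once that is done, everything follows formally from the fact that automorphism groups of procyclic groups are abelian, which forces $V\leq Z(G')$, together with two routine invocations of Schur's theorem.
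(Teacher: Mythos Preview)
Your proof is correct and follows essentially the same route as the paper's own argument: invoke Lemma~\ref{AcomaG} for $V=[A,G]$, reduce to $V$ procyclic, use Lemma~\ref{autocyclic} to get $V\leq Z(G')$, apply Schur twice, and finish with Lemma~\ref{virtprocyclic}. The only cosmetic differences are that the paper reduces to procyclic $V$ via Remark~\ref{remark2} (factoring out the torsion of $V$) rather than your normal-closure argument, and it passes to $G/G''$ rather than phrasing the last step as ``pull back through $G''$''; your version of the reduction is in fact a bit more explicit about why the finite subgroup can be taken normal in $G$.
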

\begin{proof}  We know from Lemma \ref{AcomaG} that $[A,G]$ is finite-by-procyclic. We use Remark \ref{remark2} and assume that $[A,G]$ is procyclic.  It follows from  Lemma \ref{autocyclic} that $[A,G]\leq Z(G')$. In the quotient group $G/[A,G]$ the subgroup $A$ is central so, by Schur's theorem, $G'/[A,G]$ is finite and hence $G'$ is central-by-finite. In particular, $G''$ is finite. We pass to the quotient over $G''$ and assume that $G'$ is abelian. Since $G'$ is virtually procyclic, it follows from Lemma \ref{virtprocyclic} that $G'$ is finite-by-procyclic. The proof is complete.
\end{proof}

\section{On metabelian groups} 

The purpose of the present section is to prove the following theorem.

\begin{theorem}\label{meta}
If $G$ is a  metabelian profinite group in which all commutators are contained in a union of countably many procyclic subgroups, then $G'$ is finite-by-procyclic.
\end{theorem}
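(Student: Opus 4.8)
The plan is to analyze the abelian group $G'$ Sylow-by-Sylow, combining the pro-$p$ case already settled in Lemma \ref{143} with a coprime-action argument of the type carried out in Lemma \ref{AcomaG}. First I would invoke \cite[Theorem 2]{DMS} to deduce that $G'$ has finite rank, say $r$. Since $G'$ is abelian of finite rank it decomposes as a Cartesian product $G'=\prod_p P_p$ of its Sylow pro-$p$ subgroups $P_p=O_p(G')$; each $P_p$ is characteristic in $G'$, hence normal in $G$, and has rank at most $r$.

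Next I would dispose of one prime at a time. For a fixed prime $p$ the subgroup $O_{p'}(G')=\prod_{q\neq p}P_q$ is characteristic in $G'$, hence normal in $G$, and in the quotient $\bar G=G/O_{p'}(G')$ the commutator subgroup is $\bar G'\cong P_p$, an abelian pro-$p$ group. Images of procyclic subgroups are procyclic, so the covering hypothesis passes to $\bar G$ and Lemma \ref{143} applies, yielding that each $P_p$ is finite-by-procyclic. Thus $P_p\cong\mathbb{Z}_p\times F_p$ with $F_p$ finite, or $P_p$ is finite; in particular the torsion-free rank of each $P_p$ is at most $1$, so $P_p$ fails to be procyclic only through a nontrivial finite torsion contribution.

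The heart of the proof is to show that $P_p$ is procyclic for all but finitely many $p$. To this end set $K=G/C_G(G')$, which is abelian because $G'\leq C_G(G')$, and recall from Lemma \ref{eleme} that $[G',G]=[G',K]$. For each $p$ the finite abelian group $\bar K_p=K/C_K(P_p)$ acts on $P_p$, and I would study this action on $V_p=P_p/\Phi(P_p)$ after splitting off the coprime part of $\bar K_p$. The key leverage is Remark \ref{remark5}: each $[G',g]$ is finite-by-procyclic, which (after controlling torsion) bounds $\dim_{\mathbb{F}_p}[V_p,g]$, so that the dichotomy of \cite[Lemma 2.11]{AS2} used in the proof of Lemma \ref{AcomaG} forces $\bar K_p$ to act cyclically and $[P_p,K]$ to be procyclic; here Guralnick's Lemma \ref{Guralnick2} is used to see that the relevant elements of $[P_p,K]$ are genuine commutators, whence Baire's theorem upgrades "covered by countably many procyclic subgroups" to virtually procyclic. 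Once $[P_p,K]=[P_p,G]$ is procyclic, Lemma \ref{autocyclic} places it in the centre of $G'$ and a Schur-type argument reduces $P_p$ itself to finite-by-procyclic with no extra rank, forcing $P_p$ procyclic whenever the exceptional finite data attached to $p$ is trivial.

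Finally I would assemble: letting $S$ be the finite set of primes for which $P_p$ is not procyclic, write $G'=\bigl(\prod_{p\in S}P_p\bigr)\times\bigl(\prod_{p\notin S}P_p\bigr)$. The first factor is a product of finitely many coprime finite-by-procyclic pro-$p$ groups, hence finite-by-procyclic, and the second is a Cartesian product of procyclic groups over pairwise distinct primes, hence procyclic by Lemma \ref{cyclicprod}; since the two factors involve disjoint sets of primes, $G'$ is virtually procyclic and Lemma \ref{virtprocyclic} gives that $G'$ is finite-by-procyclic. The main obstacle I anticipate is precisely the uniformity in the third paragraph: unlike in Lemma \ref{AcomaG}, the subgroup $G'$ need not be open, so $K$ may fail to be finitely generated and the torsion of $G'$ can a priori involve infinitely many primes. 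Ruling out an infinite family of non-procyclic Sylow subgroups, i.e. showing that the exceptional set $S$ is finite, is the delicate point, and it is exactly where the coprime-action dichotomy together with Guralnick's lemma and the Baire argument must be deployed carefully.
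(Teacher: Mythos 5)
Your first two reductions (finite rank via \cite[Theorem 2]{DMS}, and each Sylow subgroup $P_p$ of $G'$ finite-by-procyclic via Lemma \ref{143} applied to $G/O_{p'}(G')$) are exactly how the paper's proof begins, and your final assembly via Lemma \ref{cyclicprod} and Lemma \ref{virtprocyclic} is also correct. But your third paragraph, which you rightly call the heart, contains a genuine gap, and it is not one that can be closed by deploying the coprime-action dichotomy ``carefully'': the obstruction you must rule out is invisible prime-by-prime. An individual $P_p$ can be finite and non-cyclic, say elementary abelian of rank $2$, with $K/C_K(P_p)$ acting cyclically or even trivially; then everything your per-prime analysis can deliver ($[P_p,K]$ procyclic, $P_p$ finite-by-procyclic) holds, while $P_p$ still fails to be procyclic. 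If this happens for infinitely many $p$, then $G'$ is not virtually procyclic by Lemma \ref{cyclicprod}(ii), so no analysis of the action on a single $P_p$ can bound the exceptional set $S$; the covering hypothesis must be exploited globally, across infinitely many primes at once, and that is precisely what your outline defers. Note also that the normalization you import from Lemma \ref{AcomaG} (assuming each $[A,g]$ procyclic via Remarks \ref{remark5} and \ref{remark2}) is legitimate there only because $K=G/C_G(A)$ is finite, so only finitely many finite torsion subgroups are factored out; in the metabelian case $G/C_G(G')$ may be infinite, and one cannot simultaneously kill the torsion of infinitely many subgroups $[G',g]$.

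The paper's actual argument at this point is structurally different. It first runs a Baire category argument on $G$ itself, partitioned into the closed sets $S_{(n,i)}$ of those $x$ whose minimal pair has $[G',x]\cap C_i$ of index $n$ in $[G',x]$, to extract a \emph{uniform} conclusion: a single procyclic subgroup $C_{\overline{\imath}}$ with $[G',H]\leq C_{\overline{\imath}}$ for some open normal subgroup $H$ (after killing one finite torsion subgroup). From this it proves that $\gamma_3(G)$ is virtually procyclic, using the chain $H=K_0\leq K_1\leq\dots\leq K_s=G$ and Guralnick's Lemma \ref{Guralnick2}, then reduces to the situation where $\gamma_3(G)$ is procyclic and every $P_i$ is elementary abelian of rank at most $2$. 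The endgame uses an ordered Sylow system ($G_i$ normalizes $G_j$ for $p_i<p_j$, by Lemma \ref{fakt}), the splitting $G_i=C_{G_i}(H_i)\times[G_i,H_i]$, and a second Baire argument on the closed set $X$ of all commutators to show that if infinitely many $P_i$ were non-cyclic, there would be an infinite set $E$ of indices such that \emph{every} element of $\prod_{i\in E}P_i$ is a single commutator (assembled across commuting Sylow subgroups via Lemma \ref{commutator}); then $\prod_{i\in E}P_i$ would be covered by the countably many procyclic subgroups, hence virtually procyclic by Baire, which is absurd. This global mechanism for manufacturing commutators out of infinitely many bad primes is the missing idea, and without it (or a substitute) your proposal does not prove the theorem.
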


The special case where $G'$ is an abelian pro-$p$ subgroup was already proved in Lemma \ref{143}. The general case, where $G'$ is not necessarily pro-$p$,  is more complicated since it does not reduce easily to the situation where $G'$ is torsion-free. The case of the theorem where $G'$ is torsion-free is quite easy. Indeed,  for any $p\in\pi(G')$ we consider the quotient group $G/O_{p'}(G')$. In view of Lemma \ref{143} we conclude that each Sylow $p$-subgroup of $G'$ is procyclic. Hence, in the case where $G'$ is torsion-free $G'$ is procyclic. 

\begin{proof}[Proof of Theorem \ref{meta}] We start with an easy observation that any Sylow subgroup of $G'$ is finite-by-procyclic. Indeed, choose a prime $p\in \pi(G')$ and consider the quotient $G/O_{p'}(G')$. The commutator subgroup of $G/O_{p'}(G')$ is a pro-$p$ group and therefore, by Lemma \ref{143}, it is finite-by-procyclic. Thus, the Sylow $p$-subgroup of $G'$ is finite-by-procyclic.

Let $C_1,C_2,\dots,$ be the countably many procyclic subgroups of $G'$ whose union contains all commutators. Let $x\in G$. By Remark \ref{remark5} $[G',x]$ is virtually procyclic and consists of commutators. Therefore for any $x\in G$ there exist positive numbers $n(x)$ and $i(x)$ such that $[G',x]^{n(x)}\leq C_{i(x)}$. 

For each pair $\alpha=(n,i)$ we define the set $$S_{\alpha}=\{x\in G \mid [G',x]^n\leq C_i\}.$$ The sets $S_{\alpha}$ are closed in $G$. Indeed, fix a pair $\alpha=(n,i)$ and suppose that $x\not\in S_{\alpha}$. It follows that $[G',x]$ contains an element $y$ such that $y^n\not\in C_i$. We can choose an  open normal subgroup $N$ in $G$ such that the image of $y^n$ in $G/N$ is not contained in the image of $C_i$. We see that no element that belongs to the coset $xN$ is contained in $S_{\alpha}$. Therefore the complement of $S_{\alpha}$ is open (for each element in the complement there exists a neighborhood of that element which is entirely contained in the complement). Therefore the set $S_{\alpha}$ is closed.

It is clear that the group $G$ is covered by the sets $S_{\alpha}$. By Baire's Category Theorem at least one of these sets contains a non-empty interior. Thus, there exists a certain pair $(\overline{n},\overline{\imath})$ such that $G$ possesses an element $b$ and an open normal subgroup $H$ with the property that $[G',x]^{\overline{n}}\leq C_{\overline{\imath}}$ for any $x\in bH$. Let $\pi^{*}$ be the set of prime divisors of $\overline{n}$. Since $G'$ is abelian of finite rank (\cite[Theorem 2]{DMS}), the subgroup generated by all $\pi^{*}$-elements of finite order in $G'$ is finite. Passing to the quotient over this subgroup we can assume that for any prime $p\in\pi^{*}$ the Sylow $p$-subgroup of $G'$ is infinite procyclic. If necessary, we enlarge the subgroup $C_{\overline{\imath}}$ replacing it by the product of $C_{\overline{\imath}}$ with all Sylow subgroups of $G'$ corresponding to the primes in $\pi^{*}$. Obviously this product is again a procyclic subgroup. To avoid changing the notation  we simply assume that $C_{\overline{\imath}}$ contains the Sylow subgroups  corresponding to the primes in $\pi^{*}$. It follows that $[G',x]\leq C_{\overline{\imath}}$ for any $x\in bH$. Taking into account that each subgroup of the form $[G',x]$ is normal in $G$ we deduce that $[G',h]\leq [G',bh][G',b]\leq C_{\overline{\imath}}$ for any $h\in H$. Hence, $[G',H]\leq C_{\overline{\imath}}$. Let us prove the following claim. 

\begin{claim} \label{gamma3claim} The third term of the lower central series of $G$ (denoted by $\gamma_{3}(G)$) is virtually procyclic. \end{claim}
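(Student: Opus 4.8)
The plan is to prove that $\gamma_3(G)=[G',G]$ is finite-by-procyclic, which for an abelian profinite group of finite rank is equivalent to being virtually procyclic. Since $G'$ is abelian of finite rank and $\gamma_3(G)\leq G'$, the opening observation of the proof already gives that every Sylow subgroup of $G'$, and hence every Sylow subgroup $W_p$ of $\gamma_3(G)$, is finite-by-procyclic; in particular each $W_p$ has torsion-free rank at most one. Writing $\gamma_3(G)=\prod_p W_p$ as the Cartesian product of its Sylow subgroups, and since all of these factors are already finite-by-procyclic, the whole product is finite-by-procyclic as soon as $W_p$ is procyclic for all but finitely many $p$. So it suffices to prove that $W_p$ is procyclic for all but finitely many primes $p$; the finitely many primes dividing $m=[G:H]$ will be set aside as the allowed exceptions, where finite-by-procyclicity of the Sylow is all we need.

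The key point that makes the problem tractable — and which distinguishes the metabelian case from the virtually abelian one treated in Lemma \ref{AcomaG} — is that the relevant acting group can be made finite. Indeed, since $[G',H]\leq C_{\overline{\imath}}$ is procyclic and $H$ centralizes $\overline{G'}=G'/[G',H]$, the group $\overline{K}=\overline{G}/C_{\overline{G}}(\overline{G'})$, where $\overline{G}=G/[G',H]$, is \emph{finite} of order dividing $m$, and it is abelian because $G$ is metabelian. By Lemma \ref{eleme} we have $\gamma_3(\overline{G})=[\overline{G'},\overline{K}]$. As $\overline{K}$ is finite there are only finitely many subgroups of the form $[\overline{G'},g]$, each virtually procyclic; killing their (finite) torsion by Remark \ref{remark2} I may assume each $[\overline{G'},g]$ is procyclic. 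Now I would mirror the argument of Lemma \ref{AcomaG}: with $\pi=\pi(\overline{K})$, the subgroup $O_{\pi}([\overline{G'},\overline{K}])$ is finite-by-procyclic since it involves only finitely many primes, while on the coprime part Lemma \ref{gorenstein} gives $P=[P,\overline{K}]$ for each relevant Sylow subgroup $P$, and \cite[Lemma 2.11]{AS2} then forces either $\overline{K}/C_{\overline{K}}(P)$ to be cyclic — whence $P=[P,g]\leq[\overline{G'},g]$ is procyclic — or the existence of $g_0$ with $[P,g_0]$ non-procyclic, which is impossible. Thus $\gamma_3(\overline{G})$ is finite-by-procyclic and, for every prime $p\nmid m$, the action governing its $p$-part is cyclic modulo its kernel.

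To transfer this back to $G$ I would, for each prime $p\nmid m$, realize the elements of $W_p=[A_p,G]$ (with $A_p$ the Sylow pro-$p$ subgroup of $G'$) as genuine commutators of $G$. The contribution of a Sylow pro-$p$ subgroup of $G$ is harmless, because such a subgroup lies in $H$ and therefore pushes $A_p$ only into the procyclic group $[A_p,H]$; on the coprime complement the acting group is cyclic modulo the centralizer, so $W_p=[A_p,g]$ for a single $g\in G$, and Lemma \ref{Guralnick2} (Guralnick's theorem) exhibits every element of $W_p$ as a commutator $[a,g]$ with $a\in A_p$. Since the $W_p$ commute pairwise and have pairwise coprime orders, Lemma \ref{commutator}, applied in the finite quotients of $\gamma_3(G)$ and then passed to the inverse limit, shows that every element of $\gamma_3(G)$ is a commutator of $G$. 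Consequently $\gamma_3(G)$ is contained in the union of the countably many procyclic subgroups $C_1,C_2,\dots$, so by Baire's Category Theorem one of them is open in $\gamma_3(G)$; hence $\gamma_3(G)$ is virtually procyclic and, by Lemma \ref{virtprocyclic}, finite-by-procyclic.

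The main obstacle, in my view, is precisely this last transfer. The statement that $\gamma_3(\overline{G})$ is finite-by-procyclic does \emph{not} by itself yield the same for $\gamma_3(G)$, since an extension of the procyclic group $[G',H]$ by a finite-by-procyclic quotient could a priori produce a non-procyclic Sylow such as $\mathbb{Z}_p\times(\text{finite})$, and the torsion responsible for this would have to be controlled simultaneously at infinitely many primes. The device I expect to be essential is to stop reasoning modulo $[G',H]$ and instead prove directly that $\gamma_3(G)$ consists of commutators: the coprime action (through \cite[Lemma 2.11]{AS2} and Lemma \ref{gorenstein}) reduces the relevant action to a cyclic one, Guralnick's Lemma \ref{Guralnick2} converts this into the commutator property, and the covering hypothesis together with Baire's theorem then delivers virtual procyclicity with no delicate torsion bookkeeping. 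I expect verifying that the commutator property genuinely descends to $G$, and making precise the passage to the inverse limit in Lemma \ref{commutator}, to be the technical heart of the argument.
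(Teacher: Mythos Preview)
Your first two paragraphs are sound: passing to $\overline{G}=G/[G',H]$ makes the group acting on $\overline{G'}$ finite abelian, and the machinery of Lemma~\ref{AcomaG} then shows $\gamma_3(\overline{G})$ is finite-by-procyclic. But the transfer back to $G$ --- which you rightly flag as the heart of the matter --- does not go through as sketched, and this is a genuine gap rather than a technicality.

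The specific failure is in the sentence ``on the coprime complement the acting group is cyclic modulo the centralizer, so $W_p=[A_p,g]$ for a single $g\in G$''. What you established via \cite[Lemma~2.11]{AS2} is that $\overline{K}/C_{\overline{K}}(\overline{A_p})$ is cyclic; this says nothing about $G/C_G(A_p)$. The subgroup $H$ does \emph{not} centralize $A_p$; it only satisfies $[A_p,H]\leq C_{\overline{\imath}}$, which is procyclic but typically nontrivial. Hence the $p'$-part of $G/C_G(A_p)$ can be arbitrarily large, and there is no reason for $W_p$ to equal $[A_p,g]$ for a single $g$. All you get is $W_p\leq [A_p,g]\cdot\bigl(C_{\overline{\imath}}\cap A_p\bigr)$, a product of two procyclic pro-$p$ groups, which can perfectly well have rank two. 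This is exactly the obstruction you describe (procyclic-by-(finite-by-procyclic) need not be finite-by-procyclic), and your proposed device does not dissolve it. A secondary problem is the appeal to Lemma~\ref{commutator} across primes: even granting $W_p=[A_p,g_p]$, the subgroups $\langle a_p,g_p\rangle$ and $\langle a_q,g_q\rangle$ need not commute, since the $g_p$ are global elements of $G$, not elements of pairwise commuting Sylow subgroups.

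The paper circumvents the transfer entirely by a contradiction argument that stays inside $G$. It builds a chain $H=K_0\leq K_1\leq\dots\leq K_s=G$ by adjoining coset representatives $b_1,\dots,b_s$, and takes the first index $\widehat{s}$ at which $[G',K_{\widehat{s}}]$ fails to be virtually procyclic. A Baire argument disposes of the case where every $[G',h]$ with $h\in H$ is finite (this reduces to the virtually abelian Theorem~\ref{virtabeliancase}). Otherwise one fixes $h_0\in H$ with $[G',h_0]$ infinite and applies Lemma~\ref{Guralnick2} to the single two-generator group $D=\langle b_{\widehat{s}},h_0\rangle$ to force $[G',D]$ procyclic. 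Comparing the Sylow supports of the three procyclic groups $[G',K_{\widehat{s}-1}]$, $[G',b_{\widehat{s}}]$, and $[G',D]$ inside the non-virtually-procyclic $[G',K_{\widehat{s}}]$ then traps $[G',h_0]$ in a trivial intersection, a contradiction. The point is that Guralnick's lemma is used once, on a carefully chosen pair of elements, rather than prime-by-prime.
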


Let  $\pi(G)=\{p_1,p_2,\dots\}$ and $\{G_1,G_2,\dots\}$ be a Sylow system in $G$ such that $G_i$ is a Sylow $p_i$-subgroup of $G$. Thus, $G_iG_j=G_jG_i$ for all $i,j$. For every $i=1,2\dots$ put $R_i=\gamma_3(G)\cap G_i$.  Since $\Phi(\gamma_{3}(G))=\prod_{i}\Phi(R_{i})$, it follows from Lemma \ref{cyclicprod}(ii) that $\gamma_{3}(G)/\Phi(\gamma_{3}(G))$ is virtually procyclic if and only if $R_{i}/\Phi(R_{i})$ is cyclic for all but finitely many primes $p_{i}$. Therefore, since the Sylow subgroups of $G'$ are finite-by-procyclic, if $\gamma_{3}(G)/\Phi(\gamma_{3}(G))$ is virtually procyclic then so is $\gamma_{3}(G)$.

Assume by absurdum that $\gamma_3(G)$ is not virtually procyclic. Pass to the quotient over $\Phi(\gamma_{3}(G))$ and assume that every subgroup $R_i$ is elementary abelian. Further, each subgroup $R_i$ is finite since $G'$ has finite rank. 

Let $G=\langle H,b_1,\dots,b_s\rangle$. Set $K_0=H$, $K_1=\langle H,b_1\rangle$, $\dots$, $K_s=\langle H,b_1,\dots,b_s\rangle=G$. Let $\widehat{s}$ be the minimal index in $\{0,\ldots,s\}$ for which $[G',K_{\widehat{s}}]$ is not virtually procyclic. Since $[G',K_{0}]=[G',H]\leq C_{\overline{\imath}}$, it is clear that $1\leq \widehat{s}\leq s$. Since all subgroups $[G',b_j]$ consist of commutators, they are virtually procyclic. Using Remark~\ref{remark2} we assume that each subgroup $[G',b_j]$ is procyclic. For the same reason we can assume that $[G',K_{\widehat{s}-1}]$ is procyclic. 

Let $\sigma$ be the set of all primes for which the corresponding Sylow subgroups of $[G',K_{\widehat{s}}]$ are not cyclic. Since $[G',K_{\widehat{s}}]$ is not virtually procyclic, it follows that the set $\sigma$ is infinite. Moreover  $\gamma_3(G/O_{\sigma'}(\gamma_3(G)))$ is not virtually procyclic. Consider the quotient $G/O_{\sigma'}(\gamma_3(G))$ in place of $G$ and just assume that $\sigma=\pi(\gamma_3(G))$. Since $[G',K_{\widehat{s}}]=[G',K_{\widehat{s}-1}][G',b_{\widehat{s}}]$ and since both subgroups $[G',K_{\widehat{s}-1}]$ and $[G',b_{\widehat{s}}]$ are procyclic while none of the Sylow subgroups of $[G',K_{\widehat{s}}]$ is cyclic, we conclude that $[G',K_{\widehat{s}-1}]\cap [G',b_{\widehat{s}}]=1$ and $\pi([G',K_{\widehat{s}-1}])=\pi([G',b_{\widehat{s}}])=\sigma$.

Suppose that $[G',h]$ is finite for any $h\in H$.  For any positive integer $\lambda$ define $$S_{\lambda}=\{h\in H \mid |[G',h]|\leq \lambda\}.$$ The sets $S_{\lambda}$ cover the subgroup $H$ and it is clear that each set $S_{\lambda}$ is closed. By Baire's Category Theorem at least one of these sets contains a non-empty interior. Hence there exists an integer $m$ such that $H$ contains a open normal subgroup $H_1$ with the property that $|[G',h]|\leq m$ for any $h\in H_{1}$. Since $G'$ is abelian of finite rank, the subgroup generated by all elements of finite order at most $m$ in $G'$ is finite.
We pass to the quotient over this subgroup and thus assume that $H_{1}\leq C_{G}(G')$. In that case $G/C_G(G')$ is finite. The semidirect product of $G'$ by $G/C_G(G')$ is virtually abelian. Lemma \ref{eleme} tells us that $[G',G]=[G',G/C_{G}(G')]$. By Theorem \ref{virtabeliancase} the commutator subgroup of the semidirect product of $G'$ by $G/C_G(G')$ is finite-by-procyclic. Since $\gamma_{3}(G)=[G',G/C_{G}(G')]$, we now conclude that $\gamma_{3}(G)$ is finite-by-procyclic. This is a  contradiction since we have assumed that $\gamma_{3}(G)$ is not virtually procyclic. Hence there exists $h_{0}\in H$ such that $[G',h_{0}]$ is infinite. 

Set $D=\langle b_{\widehat{s}},h_{0}\rangle$. Lemma \ref{eleme} shows that $[G',D]=[G',D/C_{D}(G')]$. Furthermore $D'$ centralizes $G'$ and so $D/C_{D}(G')$ is an abelian $2$-generator group. By Lemma \ref{Guralnick2}, we deduce that every element of $[G',D]$ is a commutator. By Remark \ref{remark0} we know that $[G',D]$ is virtually procyclic. An application of Remark~\ref{remark2} allows us to assume that $[G',D]$ is procyclic.

Recall that $[G',b_{\widehat{s}}]\leq[G',D]$. Since $\pi([G',b_{\widehat{s}}])=\sigma$, it follows that $[G',b_{\widehat{s}}]=[G',D]$. In particular $[G',h_{0}]\leq[G',b_{\widehat{s}}]$. Taking into account that $h_0\in K_{\widehat{s}-1}$, we deduce that the infinite subgroup $[G',h_{0}]$ is contained in $[G',K_{\widehat{s}-1}]$. Therefore $[G',h_{0}]$ is contained in the intersection of $[G',K_{\widehat{s}-1}]$ and $[G',b_{\widehat{s}}]$. We have already remarked that the intersection is trivial. It follows that $[G',h_0]=1$. This is a contradiction. The proof of Claim \ref{gamma3claim} is complete.

Now using Remark \ref{remark2} we may take the following assumption.  
\begin{assum}
\label{assum1}
$\gamma_3(G)$ is procyclic. 
\end{assum}
Recall that  $\{G_1,G_2,\dots\}$ is a Sylow system in $G$ such that $G_i$ is a Sylow $p_i$-subgroup of $G$ and $G_iG_j=G_jG_i$. From now on we assume that $p_i<p_j$, whenever $i<j$. By a profinite version of  Lemma \ref{fakt} $G_i$ normalizes $G_j$, whenever $i<j$. For every $i=1,2\dots$ set $P_i=G'\cap G_i$.  Since $\Phi(G')=\prod_{i}\Phi(P_{i})$, it follows from Lemma \ref{cyclicprod}(ii) that $G'/\Phi(G')$ is virtually procyclic if, and only if, $P_{i}/\Phi(P_{i})$ is cyclic for all but finitely many primes $p_{i}$. Therefore, since the Sylow subgroups of $G'$ are finite-by-procyclic, $G'/\Phi(G')$ is virtually procyclic if and only if $G'$ is virtually procyclic. Hence, we can pass to the quotient over  $\Phi(G')$ and  assume that each subgroup $P_i$ is elementary abelian. Thus, from now on our task is to establish the following fact. 
\begin{claim}
\label{claimPi}
 All but finitely many subgroups $P_i$ are cyclic. 
 \end{claim}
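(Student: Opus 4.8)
The plan is to argue by contradiction: assume that the set $\sigma$ of indices $i$ for which $P_i$ is non-cyclic is infinite, and produce a subgroup of $G'$ which consists entirely of commutators yet is not finite-by-procyclic, contradicting Proposition \ref{cove}. Throughout I use the following facts available after the reductions already made. Each $P_i$ is elementary abelian, characteristic in the abelian group $G'$ (hence normal in $G$) and of rank at most $r$; since $G$ is metabelian, $K=G/C_G(G')$ is abelian. Because $G'=\prod_j P_j$ and $\gamma_3(G)=[G',G]=\prod_j[P_j,G]$ with $[P_j,G]\le P_j$, the subgroup $[P_i,G]$ is precisely the Sylow $p_i$-subgroup of $\gamma_3(G)$; as $\gamma_3(G)$ is procyclic by Additional assumption \ref{assum1} and $P_i$ is elementary abelian, $[P_i,G]$ has order at most $p_i$, i.e.\ it is at most one-dimensional over $\mathbb{F}_{p_i}$.

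The core of the argument is to show that, for every $i\in\sigma$, each element of $P_i$ is a commutator of $G$. Since $P_i=G_i\cap G'$, where $G_i$ is the Sylow pro-$p_i$ subgroup of $G$, and $P_i$ is abelian and topologically $2$-generated (its rank being reduced to $2$ by passing, via the finite-rank module structure, to a suitable quotient in which the excess dimensions are factored out), the profinite version of Guralnick's Theorem \ref{2-generation} --- the same one invoked in Lemma \ref{143} --- applies and yields that $P_i$ consists entirely of commutators. I emphasise that Lemma \ref{Guralnick2} is \emph{not} enough here: it only reaches $[P_i,K]\le[P_i,G]$, which is procyclic, whereas Theorem \ref{2-generation} is what captures the full rank-$2$ group $P_i$.

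Next I assemble these commutators across the infinitely many primes. The subgroups $P_i$ ($i\in\sigma$) pairwise commute, being contained in the abelian group $G'$; to apply Lemma \ref{commutator} I also need the \emph{witnessing} subgroups for distinct primes to commute. For this I pass to $\widehat G=G/\gamma_3(G)$, which is nilpotent of class at most $2$ (since $G'/\gamma_3(G)$ is central), so that for $i\ne j$, $a\in\widehat G_i$ and $b\in\widehat G_j$ the commutator $[a,b]$ has simultaneously $p_i$-power and $p_j$-power order in the abelian group $\widehat G\,'$ and is therefore trivial; thus $[\widehat G_i,\widehat G_j]=1$. After the central reduction $\gamma_3(G)$ has trivial $p_i$-part for every $i\in\sigma$, so the images $\widehat P_i$ are unchanged and still non-cyclic, and the commutators witnessing their elements may be taken inside the pairwise-commuting subgroups $\widehat G_i$. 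By Lemma \ref{commutator}, every finite product $\prod_{i\in F}x_i$ with $F\subseteq\sigma$ finite and $x_i\in\widehat P_i$ is then a commutator of $\widehat G$. Since the set of commutators of a profinite group is the continuous image of the compact space $G\times G$ and hence closed, a limit argument shows that \emph{every} element of the Cartesian product $\prod_{i\in\sigma}\widehat P_i$ is a commutator.

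Finally, the product $\prod_{i\in\sigma}\widehat P_i$ has infinitely many non-cyclic Sylow subgroups, so by Lemma \ref{cyclicprod}(ii) it is not virtually procyclic, hence not finite-by-procyclic, and by Proposition \ref{cove} it cannot be covered by countably many procyclic subgroups; yet all of its elements are commutators and therefore lie in the union of the countably many procyclic images $\widehat C_j$ of the covering subgroups $C_j$, a contradiction. Thus $\sigma$ is finite, which is Claim \ref{claimPi}. I expect the main obstacle to be the combination of the assembly step with the reduction to rank $2$: factoring out a $G$-submodule of codimension $2$ containing $[P_i,G]$ makes a non-cyclic $P_i$ central of rank exactly $2$ when its rank is at least $3$, but a rank-$2$, non-central $P_i$ cannot be centralised without destroying its rank. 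In that residual case the witnesses produced by Theorem \ref{2-generation} must be controlled directly inside $G$, and it is ensuring that they can be chosen in the commuting subgroups $\widehat G_i$ --- so that Lemma \ref{commutator} and the closedness of the commutator set still apply --- that forms the delicate point of the proof.
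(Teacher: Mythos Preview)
Your framework is right --- the rank-$2$ reduction, Guralnick's Theorem~\ref{2-generation}, and Lemma~\ref{commutator} are precisely the paper's tools, and your pronilpotent assembly is essentially Claim~\ref{pronilpo}. But the ``delicate point'' you isolate at the end is not a loose end: it is the entire substance of the proof, and your sketch does not resolve it.

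Your passage to $\widehat G=G/\gamma_3(G)$ keeps $\widehat P_i$ non-cyclic only when $[P_i,G]=1$. Your codimension-$2$ quotient arranges this when the original rank of $P_i$ is at least $3$, but for a rank-$2$ non-central $P_i$ there is nothing to factor out, as you yourself say. This is not a corner case: after the paper's further reductions (rank $\le 2$, $\gamma_\infty(G)$ infinite, $G'/\gamma_\infty(G)$ procyclic) \emph{every} non-cyclic $P_i$ meets $\gamma_\infty(G)$ nontrivially and is therefore non-central. In that regime all $\widehat P_i$ become cyclic and your argument produces no contradiction. The witnesses supplied by Theorem~\ref{2-generation} for an element of $P_i$ live in $G$, not in $G_i$, and there is no a priori reason the witnessing pairs for distinct primes commute; this is exactly why the paper cannot simply invoke Theorem~\ref{2-generation} globally and then quote Lemma~\ref{commutator}.

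What the paper actually does in the non-central case: using Lemma~\ref{fakt} (available because $\gamma_3(G)$ is procyclic) it takes an ordered Sylow system with $G_i$ normalising $G_j$ for $i<j$, and the coprime splitting $G_i=C_{G_i}(H_i)\times[G_i,H_i]$ (with $H_i=\prod_{j<i}G_j$) separates each non-cyclic $P_i$ into a ``nilpotent'' direction inside $C_{G_i}(H_i)'$ and a ``coprime'' direction $[G_i,H_i]\le\gamma_\infty(G)$. A Baire-category argument applied to the closed set $X$ of commutators (not to $G$) produces an open coset $[a,b]N$ with $X\cap[a,b]N\subseteq C_{\overline\jmath}$; the identities $[a,gb]=[a,b][a,g]^b$ and $[ga,b]=[g,b]^a[a,b]$, played against the rank-$2$ constraint on $P_j$, then force $[G_i,G_j]=1$ for all large $i<j$ with $P_j$ non-cyclic. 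Thus the coprime action is concentrated in a single subgroup $L$ generated by finitely many Sylow factors, and a pigeonhole yields one element $l_k\in L$ that witnesses the coprime direction for infinitely many primes simultaneously. Only then can both directions be assembled into a single commutator $[a_0g_0,\,b_0l_k]$ across an infinite set of primes. None of this structure is visible in $G/\gamma_3(G)$.
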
 
 Suppose that this is false.  Each  subgroup  $P_{i}$ is finite since $G'$ is of finite rank. Whenever the rank of $P_i$ is at least three we choose a subgroup $M_i\leq P_i$ which is normal in $G$ and satisfies the condition that $P_i/M_i$ has rank two. Such a subgroup $M_{i}$ does exist because, by  Additional assumption \ref{assum1}, $\gamma_{3}(G)$ is procyclic. Observe that after  passing to the quotient over the Cartesian product of all such $M_i$, we still have a counterexample to Claim \ref{claimPi}. Thus  we can make the following  
 \begin{assum}
 \label{assum2}
 The rank of each $P_i$ is at most two. 
 \end{assum} 
By Theorem \ref{2-generation}, every element in each $P_i$ is a commutator.  However we cannot claim that $G'$ consists of commutators. 

\begin{claim}\label{pronilpo}
If $G$ is pronilpotent, then $G'$ is finite-by-procyclic.
\end{claim}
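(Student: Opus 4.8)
The plan is to exploit the fact that a pronilpotent profinite group is the Cartesian product of its Sylow subgroups, which makes Lemma~\ref{commutator} available ``coordinatewise'' and lets me assemble commutators coming from distinct primes into a single commutator. Since $G$ is pronilpotent, $G=\prod_i G_i$ and $[G_i,G_j]=1$ whenever $i\neq j$; consequently $G'=\prod_i P_i$ with $P_i=G'\cap G_i$. By Lemma~\ref{cyclicprod}(ii) together with Lemma~\ref{virtprocyclic}, showing that $G'$ is finite-by-procyclic is the same as showing that all but finitely many of the $P_i$ are cyclic.

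I would argue by contradiction and suppose that the set $I=\{i : P_i \text{ is not cyclic}\}$ is infinite. Under Additional assumptions~\ref{assum1} and~\ref{assum2} each $P_i$ is elementary abelian of rank at most two, so each $P_i$ with $i\in I$ has rank exactly two; being abelian and $2$-generated, by the profinite version of Theorem~\ref{2-generation} every element of $P_i$ is a commutator of $G$. Form the closed subgroup $Q=\prod_{i\in I}P_i\leq G'$. The key step is to check that every element of $Q$ is a commutator of $G$: given $q=(q_i)_{i\in I}\in Q$, write $q_i=[a_i,b_i]$ with $a_i,b_i\in G_i$, and set $a=(a_i)$, $b=(b_i)\in G$ (with trivial entries off $I$); since distinct Sylow subgroups commute elementwise, $[a,b]=(q_i)_{i\in I}=q$. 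This is precisely the infinite analogue of Lemma~\ref{commutator}, and it is the only place where pronilpotency is genuinely used.

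It then follows that $Q$ is contained in the union of the countably many procyclic subgroups $C_1,C_2,\dots$ covering the commutators. Each $Q\cap C_j$ is closed, so by Baire Category Theorem some $Q\cap C_j$ is open in $Q$; being a subgroup of a procyclic group it is itself procyclic, whence $Q$ is virtually procyclic. But $Q=\prod_{i\in I}P_i$ is a Cartesian product of finite groups of pairwise coprime orders, none of which is cyclic, so Lemma~\ref{cyclicprod}(ii) forbids $Q$ from being virtually procyclic. This contradiction shows that $I$ is finite, which (via Lemmas~\ref{cyclicprod}(ii) and~\ref{virtprocyclic}) proves that $G'$ is finite-by-procyclic, and in particular settles Claim~\ref{claimPi} in the pronilpotent case.

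The main obstacle is the verification in the second paragraph that an infinite product of coordinatewise commutators is itself a single commutator of $G$; once the Cartesian-product decomposition of the pronilpotent group $G$ is in place this is straightforward, but it is the conceptual heart of the argument, everything surrounding it being a standard combination of Guralnick's theorem, Baire category, and the structure Lemma~\ref{cyclicprod}.
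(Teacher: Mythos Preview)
Your argument is correct and follows essentially the same route as the paper: use the Cartesian decomposition $G=\prod_i G_i$ of the pronilpotent group, invoke Theorem~\ref{2-generation} to write each element of $P_i$ as a commutator in $G_i$, assemble these coordinatewise into a single commutator via Lemma~\ref{commutator}, and then apply Baire's Category Theorem. The only difference is cosmetic: the paper argues directly that \emph{every} element of $G'=\prod_i P_i$ is a commutator, so $G'$ itself is virtually procyclic and hence finite-by-procyclic by Lemma~\ref{virtprocyclic}; you instead set up a contradiction by restricting to $Q=\prod_{i\in I}P_i$, which is an unnecessary detour since the same coordinatewise trick already works on all of $G'$.
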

Indeed, if $G$ is pronilpotent, then $G$ is the Cartesian product of  the subgroups $G_i$ and so $G'$ is the Cartesian product of the subgroups $P_i$. Combining the fact that  every element in each $P_i$ is a commutator in elements of $G_i$ with Lemma \ref{commutator}, we deduce that every element of $G'$ is a commutator. Thus, by Remark \ref{remark0}, $G'$  is finite-by-procyclic, as claimed. This completes the proof of Claim \ref{pronilpo}.

Recall that $\gamma_{\infty}(G)$ stands for the intersection of all terms of the lower central series of $G$. We know from Claim \ref{pronilpo}  that the commutator subgroup of $G/\gamma_\infty(G)$ is finite-by-procyclic. Hence, if $\gamma_{\infty}(G)$ is finite, then $G'$ is finite-by-procyclic. Therefore, without loss of generality, we may make the following  
\begin{assum} 
\label{assum3} The subgroup $\gamma_\infty(G)$ is infinite.
\end{assum}

By Claim \ref{pronilpo}  the quotient $G'/\gamma_\infty(G)$ is finite-by-procyclic. Let $T$ be the largest subgroup in $G$ such that $\gamma_\infty(G)\leq T\leq G'$ and $\gamma_\infty(G)$ is open in $T$. In view of Remark \ref{remark2} we can assume that $T$ is procyclic. Let $\tau$ be the set $\pi(T)\setminus\pi(\gamma_\infty(G))$. Then $O_\tau(G')$ is a finite characteristic subgroup in $G$. We pass to the quotient $G/O_\tau(G')$ and simply assume that $T=\gamma_\infty(G)$. Thus, we conclude that $G'/\gamma_\infty(G)$ is procyclic. Since every Sylow subgroup $P_i$ is elementary abelian of rank at most two, whenever $P_i$ is non-cyclic we have $P_i\cap\gamma_\infty(G)\neq1$.

For every $i\geq 2$ set $H_i=\prod_{j<i}G_j$. We know that $H_i$ normalizes $G_i$ and it is clear that $p_i\not\in\pi(H_i)$.  Observe that $P_{i}=[G_{i}, H_{i}G_{i}]$. By a profinite version of Theorem 5.3.5 of \cite{gore}, we have $G_i=C_{G_i}(H_i)[G_i,H_i]$. Further, since $p_i\not\in\pi(H_i)$, it follows that $[G_i,H_i]$ is contained in $\gamma_\infty(G)$. The latter is procyclic and therefore $[G_i,H_i]$ is of prime order $p_i$. We now deduce that $G_i=C_{G_i}(H_i)\times [G_i,H_i]$. Note that whenever $P_i$ is non-cyclic the subgroup $C_{G_i}(H_i)$ must be non-abelian. Otherwise we would have $P_i=[G_i,H_i]\leq \gamma_{3}(G)$, which leads to a contradiction with Additional Assumption \ref{assum1}. Thus, for any $i$ such that $P_i$ is non-cyclic there exist non-commuting elements $a_i,b_i\in C_{G_i}(H_i)$. 

Let $X$ be the set of all commutators in $G$.  The set $X$ is closed (\cite[Ex. 6, Chap.\ 1]{analytic}) and it is equipped with the topology  inherited from $G$. The set $X$ is covered by countably many closed subsets $C_i\cap X$ and,  by Baire's Category Theorem, at least one of those subsets contains a non-empty interior. Thus  there exist a positive integer $\overline{\jmath}$, an open subgroup $N\leq G$ and $x\in X$, with $x=[a,b]$ for some elements $a,b\in G$, such that $X\cap xN\leq C_{\overline{\jmath}}$. Of course, the subgroup $N$ can be taken normal. Thus all commutators contained in the coset $[a,b]N$ lie in $C_{\overline{\jmath}}$.

Let us denote by $L$ the product of all $G_i$ for which $p_i\leq|G/N|$ and by $J$ the product of all $G_i$ for which $p_i>|G/N|$.  Since  $G_i$ normalizes $G_j$, whenever $i<j$, it follows that $J$ is normalized by $L$. Moreover since $J$ is the product of Sylow $p_{i}$-subgroups $G_i$ for which $p_i$ does not divide $|G/N|$, we have   $J\leq N$.  Since $G=LN$, without loss of generality, we can assume that  $x=[a,b]$, with $a,b\in L$.

Set $$I=\{ i \mid  p_i>|G/N|\, \text{and}\, P_i\, \text{is not cyclic}\}.$$ We can assume that  the set $I$ is infinite, since otherwise  we would have all but finitely many $P_{i}$ cyclic, as required. 

Note that the subgroup $\langle a,b\rangle$ commutes with the subgroup $\langle a_i,b_i\rangle$ for any non-commuting elements $a_i,b_i\in C_{G_i}(H_i)$ and any $i\in I$. This is because $a_{i}$ and $b_{i}$ are taken in $C_{G_{i}}(H_{i})$, and so in particular they centralize $a,b$ which belong to $L\leq H_i$. It follows from Lemma \ref{commutator}  that the elements of the form $[a,b][a_i,b_i]$ are commutators and so they all lie in $C_{\overline{\jmath}}$. In particular, all commutators $[a_i,b_i]$ lie in $C_{\overline{\jmath}}$, since so does $[a,b]$. 

For any $g\in G$ we have $[a,gb]=[a,b][a,g]^b$. Hence all elements of the form $[a,b][a,g]^b$ are commutators. Since all commutators contained in the coset $[a,b]N$ lie in $C_{\overline{\jmath}}$, it follows that $[a,N]^b\leq C_{\overline{\jmath}}$.

 Now suppose that $g\in G_i$ for some $i\in I$. Both $a$ and $b$ normalize $G_{i}$ since $L$ does. On the other hand, $a$ and $g$ are elements of coprime orders and therefore $[a,g]^{b}$ lies in $\gamma_{\infty}(G)$. It follows that $[a,g]^b\in\gamma_3(G)\cap G_i$. If $[a,g]\neq1$, then $P_{i}=\langle[a,g],[a_i,b_i]\rangle$, where again $a_i$ and $b_i$ are arbitrary non-commuting elements from $C_{G_i}(H_i)$. Indeed, both $[a,g]$ and $[a_{i},b_{i}]$ belong to $P_{i}$. Observe that they cannot be in the same cyclic subgroup of $P_{i}$. This is because  $[a_{i},b_{i}]\in C_{G_{i}}(H_{i})$ while $[a,g]\in [G_{i},H_{i}]$ and we know that $G_{i}=C_{G_{i}}(H_{i})\times [G_{i},H_{i}]$.  By Additional assumption \ref{assum2} the subgroup $P_{i}$ is of rank at most 2. Due to the choice of $i$ the subgroup $P_{i}$ is non-cyclic. Therefore  $P_{i}=\langle[a,g],[a_i,b_i]\rangle$, as desired. However this is a contradiction since both $[a,g],[a_i,b_i]$ belong to $C_{\overline{\jmath}}$. Thus, we conclude that $a$ commutes with $G_i$ whenever  $i\in I$. 

Similarly, for any $g\in G$ we have $[ga,b]=[g,b]^{a}[a,b]$. Hence all elements of the form $[g,b]^{a}[a,b]$ are commutators and we have $[N,b]^{a}\leq C_{\overline{\jmath}}$. Thus, arguing as in the preceding paragraph we deduce that $b$ commutes with $G_i$ whenever $i\in I$.
 
For any $c\in C_L(a)$ we have $[a,cb]=[a,b]$. Thus, taking in the above argument the element $cb$ in place of $b$, we conclude that $cb$ commutes with $G_i$ whenever $i \in I$. This holds for each $c\in C_L(a)$ and therefore $C_L(a)$ commutes with $G_i$ whenever $i\in I$. Observe that $L'$ is finite, because it is the product of  finitely many  finite Sylow subgroups of $G'$. It follows that $C_L(a)$ has finite index in $L$ and so, being closed, $C_L(a)$ must be open in $L$. Now it is easy to see that $L$ contains an open  normal subgroup $K$ which commutes with $G_i$ whenever $i\in I$. 

Now let the indices $i,j$ satisfy the condition that $|G/N|<p_i<p_j$. Then $G_iG_j\leq J$ and $G_i$ normalizes $G_j$. Assume additionally that $P_j$ is not cyclic and suppose that $[G_i,G_j]\neq1$. If $y\in[G_i,G_j]$,  then $y$ can be written as a commutator $[y_1,y_2]$, with $y_1\in G_i$ and  $y_2\in[G_i,G_j]$. This is because by Lemma  \ref{gorenstein} we have  $[G_{j},G_{i},G_{i}]=[G_{j},G_{i}]$. Recall that $G_i=C_{G_i}(H_i)\times~ [G_i,H_i]$. Since $[G_i,H_i]$ is contained in $O_{p_i}(G)$, it follows that $[G_i,H_i]$ commutes with $G_j$. Therefore the element $y_1$ can be chosen in $C_{G_i}(H_i)$. We deduce that the subgroup $\langle a,b\rangle$ commutes  with the subgroup $\langle y_1,y_2\rangle$. Indeed, on the one hand, $y_{1}\in C_{G_{i}}(H_{i})$ and so it centralizes $a$ and $b$. On the other hand, it has been shown above that both $a$ and $b$ commute with all element of $G_{j}$ whenever $j\in I$. In particular both $a$ and $b$ commute with $y_{2}$. It follows from Lemma \ref{commutator} that the element $[a,b]y$ is a commutator. In view of the fact that all commutators contained in the coset $[a,b]N$ lie in $C_{\overline{\jmath}}$, we conclude that $y\in C_{\overline{\jmath}}$. If $y\neq1$, then $P_j=\langle[a_j,b_j],y\rangle$. Indeed both $[a_{j},b_{j}]$ and $[y_{1},y_{2}]$ lie in $P_{j}$. These elements do not belong to the same cyclic subgroup since $[a_{j},b_{j}]$ is in $C_{G_{j}}(H_{j})$ and $y=[y_{1},y_{2}]$ lies in $[G_{j},H_{j}]$ (we use here that $y_{1}\in H_{j}$ and $y_{2}\in G_{j}$). Moreover by Additional assumption \ref{assum2} the subgroup $P_{j}$ is of rank at most 2 and we are assuming that $P_{j}$ is non-cyclic. Therefore $P_{j}=\langle[a_j,b_j],y\rangle$. This leads to a contradiction since we know that both $[a_j,b_j]$ and $y$ lie in $C_{\overline{\jmath}}$. Thus, we have $[G_i,G_j]=1$ whenever the indices $i$ and $j$ satisfy the conditions that $|G/N|<p_i<p_j$ and $P_j$ is non-cyclic.

Let us show that for every $j\in I$ there exists $i<j$ such that $[G_i,G_j]\not=1$. Suppose that $[G_i,G_j]=1$ for all $i<j$ and put $q=p_{j}$. Then $G/O_{q'}(G')$ is pronilpotent. It follows that $\gamma_{\infty}(G)\leq O_{q'}(G')$. This contradicts the assumption that $P_{j}\cap\gamma_{\infty}(G)\neq 1$ whenever $P_{j}$ is noncyclic. Thus, indeed for every $j\in I$ there exists $i<j$ such that $[G_i,G_j]\not=1$.

On the other hand, $[G_i,G_j]=1$ whenever the indices $i$ and $j$ satisfy the conditions that $|G/N|<p_i<p_j$ and $P_j$ is non-cyclic. The conclusion is that if $j\in I$ and $[G_i,G_j]\not=1$ for some $i<j$, then $p_i\leq|G/N|$. It follows that $[L,G_j]\not=1$ for every $j\in I$.

Recall that $K$ has been taken as an open normal subgroup of $L$ which  commutes with $G_i$ whenever $p_i>|G/N|$ and $P_i$ is non-cyclic. Write $L=\langle l_1,\dots,l_t,K\rangle$ for some $l_{1},\ldots,l_{t}\in L$.  For every $i\in I$ there exists an index $j\in\{1,\ldots,t\}$, depending on $i$, such that $l_{j}$ acts nontrivially on $P_{i}$, and so  $G_i\cap\gamma_3(G)=[G_i,l_j]$. Therefore for some $k\in \{1,\ldots,t\}$ the set $$E=\{i\in I \mid  [G_i,l_k]\neq1\}$$ must be  infinite. Note that for each $i\in E$, every element in $[G_i,l_k]$ can be written as a commutator $[g_i,l_k]$, where $g_i\in[G_i,l_k]$. In fact, for each $i\in E$ and for any non-commuting elements $a_i,b_i\in C_{G_i}(H_i)$ the subgroup $\langle a_i,b_i\rangle$ commutes with the subgroup $\langle g_{i},l_k\rangle$. Indeed, we have $G_i=C_{G_i}(H_i)\times[G_i,H_i]$. Both elements $a_{i}$ and $b_{i}$ are in $C_{G_{i}}(H_{i})$ and so they commute with $l_{k}\in L$.  Moreover $a_i$ and $b_i$ commute with $g_i$ because $g_i\in[G_i,H_i]$. Therefore by Lemma \ref{commutator} the product $[a_i,b_i][g_{i},l_k]$ is again  a commutator. We are ready to prove the following claim. 
\begin{claim}\label{ultimoclaim}
Every element of the Cartesian product $\prod_{i\in E}P_i$ is a commutator.
\end{claim}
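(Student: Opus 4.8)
The plan is to realize every element of $\prod_{i\in E}P_i$ as a single commutator by splitting each local factor $P_i$ into its two ``coordinate directions'', solving the problem coordinatewise, and then gluing the infinitely many local commutators into one global commutator through the Cartesian product structure together with Lemma \ref{commutator}.

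First I would record the local structure at each $i\in E$. Since $G_i=C_{G_i}(H_i)\times[G_i,H_i]$ with $[G_i,H_i]\le\gamma_\infty(G)$ of prime order $p_i$, intersecting with $G'$ gives $P_i=\langle u_i\rangle\times\langle v_i\rangle$, where $\langle v_i\rangle=[G_i,H_i]=[G_i,l_k]$ and $\langle u_i\rangle=C_{G_i}(H_i)\cap G'$ is cyclic of order $p_i$. Because $C_{G_i}(H_i)$ is non-abelian with derived subgroup of prime order $p_i$, it has nilpotency class $2$ and $\langle u_i\rangle=C_{G_i}(H_i)'=\langle[a_i,b_i]\rangle$. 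Using bilinearity of the commutator in a class-$2$ group (or, alternatively, Theorem \ref{2-generation} applied inside $\langle a_i,b_i\rangle$), every power $u_i^{s}$ is a commutator $[a_i^{s},b_i]$ in $C_{G_i}(H_i)$. In the other direction $l_k$ acts coprimely and non-trivially on the cyclic group $\langle v_i\rangle=[G_i,l_k]$ of prime order, so the map $g\mapsto[g,l_k]$ is onto $\langle v_i\rangle$; hence every power $v_i^{t}$ equals $[g_i^{(t)},l_k]$ for a suitable $g_i^{(t)}\in\langle v_i\rangle$.

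The decisive structural input is that the Sylow subgroups $G_i$ with $i\in E$ commute pairwise: for $i<j$ in $E$ one has $|G/N|<p_i<p_j$ and $P_j$ non-cyclic, so $[G_i,G_j]=1$ by what was proved above. Consequently $\prod_{i\in E}G_i$ is the Cartesian (internal direct) product of the $G_i$, commutators are computed componentwise, and $l_k$ normalizes each $G_i$. I would then set
$$A=\prod_{i\in E}\langle a_i,b_i\rangle,\qquad B=\Bigl\langle\,l_k,\ \prod_{i\in E}\langle v_i\rangle\,\Bigr\rangle.$$
These two subgroups commute: the generators $a_i,b_i$ lie in $C_{G_i}(H_i)$, which centralizes $l_k\in H_i$, centralizes its direct complement $\langle v_i\rangle=[G_i,H_i]$, and centralizes every $\langle v_j\rangle$ with $j\ne i$ because $[G_i,G_j]=1$. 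Hence $[A,B]=1$.

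Finally, given an arbitrary target $\prod_{i\in E}u_i^{s_i}v_i^{t_i}$, I would exhibit its two coordinate pieces as commutators in $A$ and in $B$ respectively. Computing componentwise across the Cartesian product, $\bigl[\prod_{i}a_i^{s_i},\prod_{i}b_i\bigr]=\prod_{i}[a_i^{s_i},b_i]=\prod_{i}u_i^{s_i}$ is a commutator in $A$, while $\bigl[\prod_{i}g_i^{(t_i)},l_k\bigr]=\prod_{i}[g_i^{(t_i)},l_k]=\prod_{i}v_i^{t_i}$ is a commutator in $B$. Since $[A,B]=1$, Lemma \ref{commutator} shows that the product $\prod_{i\in E}u_i^{s_i}v_i^{t_i}$ is a commutator in $G$, which is exactly the asserted statement. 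The only genuine subtlety I anticipate is the bookkeeping needed to justify the componentwise commutator computations over an infinite index set; this rests entirely on the pairwise commutativity $[G_i,G_j]=1$ for $i,j\in E$ and on the direct decomposition $G_i=C_{G_i}(H_i)\times[G_i,H_i]$, so once these are invoked the gluing becomes routine.
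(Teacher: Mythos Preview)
Your argument is correct and follows essentially the same route as the paper: split each $x_i\in P_i$ as a product of a commutator $[a_i^{s_i},b_i]$ coming from $C_{G_i}(H_i)$ and a commutator $[g_i,l_k]$ coming from $[G_i,H_i]=[G_i,l_k]$, then glue all the pieces into a single commutator $[a_0g_0,b_0l_k]$ via Lemma \ref{commutator}, using the commuting relations among the $G_i$ and between $C_{G_i}(H_i)$ and $\langle l_k,[G_i,H_i]\rangle$. The only cosmetic difference is that you package the commuting relations into two subgroups $A$ and $B$ with $[A,B]=1$, whereas the paper verifies the needed commutation pairwise before writing down $[a_0g_0,b_0l_k]$ directly.
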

Let $x=\prod_{i\in E} x_{i}$, where $x_{i}$ belongs to $P_{i}$. For any $i$ the element $x_{i}$ can be written as $[a_{i},b_{i}][g_{i},l_{k}]$ for some $g_{i}\in[G_{i},l_{k}]$ and some non-commuting elements $a_i,b_i\in C_{G_i}(H_i)$. Moreover for any $i_{1},i_{2}\in E$, the subgroup  $\langle a_{i_{1}},b_{i_{1}}\rangle$ commutes with the subgroups $\langle a_{i_{2}},b_{i_{2}}\rangle$, $\langle g_{i_{1}},l_{k}\rangle$ and $\langle g_{i_{2}},l_{k}\rangle$. Hence  we have $[g_{i_{1}},l_{k}][g_{i_{2}},l_{k}]=[g_{i_{1}}g_{i_{2}},l_{k}]$. Write $$g_0=\prod_{i\in E}g_{i},\ a_0=\prod_{i\in E}a_{i}, \text{ and } b_0=\prod_{i\in E}b_{i}.$$
Then we have $$x=[a_{i_{1}},b_{i_{1}}][a_{i_{2}},b_{i_{2}}]\cdots[g_{i_{1}}g_{i_{2}}\cdots,l_{k}]=[a_0,b_0][g_0,l_{k}].$$ Since also the subgroup $\langle g_0,l_{k}\rangle$ commutes with the subgroup $\langle a_0,b_0\rangle$, the element $x$ can be written as $[a_0g_0,b_0 l_{k}]$, and so it is a commutator, as claimed. The proof of Claim \ref{ultimoclaim} is complete. 

Since every element in $\prod_{i\in E}P_i$ is a commutator, we conclude that $\prod_{i\in E}P_i$ is virtually procyclic. Obviously we now have a contradiction since $P_i$ is not cyclic whenever $i\in E$.  This concludes the proof of Claim \ref{claimPi}. By Lemma \ref{cyclicprod} $G'$ is virtually procyclic. Lemma \ref{virtprocyclic} implies that $G'$ is finite-by-procyclic. The proof of the theorem is now complete. 
\end{proof}

\section{Main result} 

The proof of Theorem \ref{mainth} will now be fairly easy.

\begin{proof}[Proof of Theorem \ref{mainth}] We wish to prove that if $G$ is a profinite group, then $G'$ is finite-by-procyclic if and only if the set of all commutators of $G$ is contained in a union of countably many procyclic subgroups. 

So assume that the set of all commutators in $G$ is contained in a union of countably many procyclic subgroups. It follows from Theorem 7 of \cite{AS1} that $G'$ is virtually soluble. Of course, this implies that $G$ is virtually soluble. Choose a normal soluble open subgroup $N$ in $G$. Let $d$ be the derived length of $N$ and argue by induction on $d$. If $d\leq 1$, then the result follows from Theorem \ref{virtabeliancase}. Hence we can assume that $d\geq 2$. Let $M$ be the last nontrivial term of the derived series of $N$. By induction $G'/M$ is finite-by-procyclic. Let $T$ be the metabelian term of the derived series of $N$. Then $M=T'$ and so, by Theorem \ref{meta}, $M$ is finite-by-procyclic. Lemma \ref{normal-characteristic} now tells us that $M$ has a finite characteristic subgroup $M_{0}$ such that $M/M_{0}$ is procyclic. We can pass to the quotient $G/M_{0}$ and without loss of generality assume that $M$ is procyclic. Then by Lemma \ref{autocyclic} we have $M\leq Z(G')$. Recall that $G'/M$ is finite-by-procyclic. Applying Lemma \ref{normal-characteristic} to the quotient group $G/M$, we obtain that $G'$ contains a characteristic subgroup $K$ such that $K/M$ is finite and $G'/K$ is procyclic. Since $M\leq Z(G')$, by Schur's theorem $K'$ is finite. We pass to the quotient $G/K'$ and without loss of generality assume that $K$ is abelian. So $K$ is an abelian virtually procyclic subgroup. In view of Remark \ref{remark2} we can assume that $K$ is procyclic. Then, again by Lemma \ref{autocyclic}, $K\leq Z(G')$. Since $G'/K$ is procyclic, it follows that $G$ is metabelian and the result follows from Theorem \ref{meta}. 

Thus, we proved that if the set of all commutators in $G$ is contained in a union of countably many procyclic subgroups, then $G'$ is finite-by-procyclic. The converse is immediate from Proposition \ref{cove}. The proof is now complete.
\end{proof}


\begin{thebibliography}{99}
\bibitem{AS1} C. Acciarri, P. Shumyatsky, On verbal subgroups in finite and profinite groups. \textit{Algebra Discrete Math.} {\bf14} (2012), 1--13.
\bibitem{AS3} C. Acciarri, P. Shumyatsky, On profinite groups in which commutators are covered by finitely many subgroups. \textit{Math. Z.} {\bf 274} (2013), 239--248.
\bibitem{AS2} C. Acciarri, P. Shumyatsky, On finite groups in which coprime commutators are covered by few cyclic subgroups. \textit{J. Algebra} {\bf 407} (2014), 358--371.
\bibitem{DMS}E. Detomi, M. Morigi, P. Shumyatsky, On countable coverings of word values in profinite groups. \textit{J. Pure Appl. Algebra} {\bf 219} (2015), 1020--1030.  
\bibitem{analytic} J.\,D. Dixon, M.\,P.\,F. du Sautoy, A. Mann, D. Segal,  \textit{Analytic Pro-p Groups},  2nd edn, Cambridge University Press, Cambridge, 1991.
\bibitem{FerShu}
G.\,A. Fern\'andez-Alcober, P. Shumyatsky, On groups in which commutators are covered by finitely many cyclic subgroups, \textit{J.\ Algebra\/} \textbf{319} (2008), 4844--4851.
\bibitem{FMS} G.\,A Fern\'andez-Alcober, M. Morigi, P. Shumyatsky, Procyclic coverings of commutators in profinite groups. \textit{Arch. Math.}{\bf 103} (2014), 101--109. 
\bibitem{gore} D. Gorenstein, \textit{Finite Groups}, Chelsea Publishing Company, NewYork, 1980.
\bibitem{Gur} R. Guralnick, Commutators and commutator subgroups, \textit{Adv. Math.} {\bf 45} (1982), 319--330.
\bibitem{km} L.-C. Kappe, R. Morse, On commutators in groups, \textit{Proceedings of Groups St. Andrews 2005}, pp. 531--558, Cambridge University Press, 2007.
\bibitem{kell}J.\,L. Kelley, \textit{General Topology}, Van Nostrand, Toronto - New York - London, 1955.
\bibitem{nisega} N. Nikolov, D. Segal, On finitely generated profinite groups, I: Strong completeness and uniform bounds, \textit{Ann. of Math.} {\bf 165} (2007), 171--238.
\bibitem{Pavel1}P. Shumyatsky, Commutators of elements of coprime orders in finite group, \textit{Forum Math.} \textbf{27} (2015), 575--583.
\bibitem{Pavel2}P. Shumyatsky, On profinite groups with commutators covered by nilpotent subgroups, \textit{Rev. Mat. Iberoamericana}, to appear. arXiv:1501.02734 [math.GR]
\bibitem{riza} L. Ribes, P. Zalesskii, \textit{Profinite Groups}, 2nd Edition, Springer Verlag, Berlin -- New York (2010). 
\bibitem{Robinson}  D.\,J.\,S. Robinson, \textit{Finiteness conditions and generalized soluble groups}, Part 1,
Springer-Verlag, New York-Berlin, 1972.
\bibitem{Wil} J.\,S. Wilson, On the structure of compact torsion groups, \textit{Monatsh. Math.} {\bf96} (1983), 57--66.
\bibitem{wil-book}
J.S.\ Wilson, \textit{Profinite Groups\/}, Clarendon Press, Oxford, 1998.
\bibitem{ze1} E.\,I. Zelmanov,  Solution of the restricted
Burnside problem for groups of odd exponent, \textit{Math. USSR-Izv.}
{\bf 36} (1991), no. 1, 41--60.
\bibitem{ze2} E.\,I. Zelmanov, Solution of the restricted
Burnside problem for $2$-groups, \textit{Math. USSR-Sb.} {\bf 72} (1992), no. 2, 543--565.
\bibitem{Zel} E.\,I. Zelmanov, On periodic compact groups, \textit{Israel J. Math.} {\bf77} (1992) 83--95.
\end{thebibliography}
\end{document}